\documentclass{amsart}[12pt]
\usepackage{amssymb}
\usepackage{amsmath}
\usepackage{amsthm}
\usepackage{amscd}
\usepackage[all]{xy}

\newtheorem{theorem}{Theorem}[section]
\newtheorem{lemma}[theorem]{Lemma}

\theoremstyle{definition}
\newtheorem{definition}[theorem]{Definition}

\newtheorem{proposition}[theorem]{Proposition}
\theoremstyle{remark}

\numberwithin{equation}{section}


\include{scrload}

\newcommand{\F}{\mathbb{F}}

\newcommand{\M}{\mathbb{M}}

\newcommand{\Z}{\mathbb{Z}}
\newcommand{\R}{\mathbb{R}}

\newcommand{\Q}{\mathbb{Q}}

\newcommand{\ga}{\gamma}

\newcommand{\rb}{\rangle}
\newcommand{\lb}{\langle}
\DeclareMathOperator{\Spec}{Spec}

\begin{document}
\title{Newton-Hodge Filtration for Self-dual $F$-Crystals}
\author{ N. E. Csima}
\date{}
\address{Department of Mathematics, University of Chicago, 5734 S. University Ave. Chicago, Illinois 60637}
\email{ecsima@math.uchicago.edu}

\maketitle


\newcounter{marker}
\newcounter{marker2}

\begin{abstract}
In this paper we study $F$-crystals with self-dual structure over base schemes of characteristic $p$.  We generalize Katz's Newton-Hodge Filtration Theorem  to $F$-crystals with self-dual structure.

\end{abstract}
\section*{Introduction}  
        Let $k$ be a perfect field of characteristic $p>0$, and let $K$ denote the field of fractions of the ring of Witt vectors $W(k)$.  Let $\sigma$ be the Frobenius automorphism of $K$. An $F$-crystal over $k$ is a free $W(k)$-module $M$ of finite rank endowed with a $\sigma$-linear endomorphism $F: M\to M$ which is injective.  By an $F$-isocrystal over $k$ we mean a finite dimensional $K$-vector space $V$ endowed with a $\sigma$-linear bijection $F:V\to V$. In the 1950's Dieudonn{\' e} introduced these notions and classified $F$-isocrystals in the case where $k$ is algebraically closed \cite{Manin}.  It turns out that the category of $F$-isocrystals over $k$ is semi-simple when $k$ is algebraically closed, and that the simple $F$-isocrystals are parameterized by $\Q$.  This result also classified $p$-divisible groups over $k$ up to isogeny.

\vspace{1ex}

        In the 1960's Grothendieck introduced the notion of $F$-crystals over $\F_p$-algebras, or ``$F$-crystals over schemes''.  This formalized the idea of having a family of  $F$-crystals over perfect fields of characteristic $p$  parameterized by points on the original scheme.  These came from Dieudonn{\'e } theory for $p$-divisible groups over arbitrary base schemes of characteristic $p$.  To each $F$-crystal over $k$ one can associate a Newton polygon and a Hodge polygon.  It is natural to ask, given an $F$-crystal over a scheme, how the Newton and Hodge polygons associated to each point vary over the scheme.  Grothendieck  proved that the Newton polygon of an $F$-crystal rises under specialization.  N. Katz proved the Newton-Hodge Decomposition Theorem  for $F$-crystals over $k$ and then proved a generalized version for $F$-crystals over certain $\F_p$-algebras \cite{Katz}.  This is referred to as Newton-Hodge filtration.   
\vspace{1ex}

        The $\sigma$-conjugacy classes of $GL_n(K)$ are in bijective correspondence with the isomorphism classes of $F$-isocrystals $(V,F )$ of dimension $n$.  This leads to the question of how the $\sigma$-conjugacy classes of other groups can be described, and this question was answered in \cite{Ko} and \cite{Ko2}.  The idea is to consider the category of $F$-isocrystals with some additional structure on them, and then show that the isomorphism classes in this new category characterize the $\sigma$-conjugacy classes.  For a connected linear algebraic group $G$ over $\Q_p$, the $\sigma$-conjugacy classes of $G$ (usually denoted $B(G)$) have been described by isomorphism classes of what are called $F$-isocrystals with $G$-structure. One example of this phenomenom is when $G=GSp_n(K)$.  In this case an $F$-isocrystal with $G$-structure is called a symplectic $F$-isocrystal.  It is a triple $(V, F, \lb\text{ },\text{ }\rb )$, where $\lb\text{ },\text{ }\rb$ is a non-degenerate skew symmetric form, $(V,F )$ is an $F$-isocrystal, and $\lb Fx,Fy\rb = c\sigma(\lb x, y\rb)$ for all $x,y\in V$ and some fixed $c\in K^{\times}$.   In \cite{Ko3} Kottwitz proved a group theoretic generalization of Katz's Newton-Hodge Decomposition Theorem for split connected reductive groups.  Later Viehmann improved this result in \cite{V}.  The ideas above came together in \cite{RR}.  They considered $F$-crystals over $\F_p$-algebras up to isogeny, or $F$-isocrystals over schemes.  Using Tannakian theory they were able to define $F$-isocrystals with $G$-structure over schemes.  They proved a generalization of Grothendieck's specialization theorem and Mazur's Inequality.
        
In this manuscipt we will define the notion of a self-dual $F$-crystal over $k$ and the notion of a self-dual $F$-crystal over an $\F_p$-algebra.  These definitions will be motivated by the notion of symplectic $F$-isocrystals.   Then we will state and prove the Newton-Hodge Decomposition Theorem for self-dual  $F$-crystals over $k$ (\ref{HNDS})  and state and prove a version of Newton-Hodge Filtration for self-dual  $F$-crystals over certain $\F_p$-algebras (\ref{NHFiltS}).  This filtration is self-dual, and thus this result produces the autodual divisible Hodge $F$-crystals considered in Katz \cite{K}.  Much of this work follows \cite{Katz}.   


\section{Review of $F$-crystals over perfect fields}
In this section we will review some facts about $F$-crystals.  These definitions can be found in \cite{Katz}.

Let $k$ be a perfect field of characteristic $p>0$.  Let $W(k)$ denote the ring of Witt vectors over $k$.  Note that $W(k)$ is a complete discrete valuation ring with uniformizer $p$.  We will denote the valuation on $W(k)$ by $\nu$.  We will let $\sigma: W(k)\to W(k)$ denote the frobenius automorphism, which is the lifting of the frobenius on $k$.  

\begin{definition} An $F$-crystal $(M,F)$ over $k$ is a free $W(k)$-module of rank $n$ equipped with a $\sigma$-linear injection $F:M\to M$.  To say that $F$ is $\sigma$-linear means that for $a\in W(k)$ and $m\in M$ we have $F(am) = \sigma(a)F(m)$ and  also that $F$ is an additive map.  A morphism of crystals $\varphi:(M,F)\to (M',F')$ is a map of $W(k)$-modules such that $\varphi\circ F = F' \circ \varphi$.  
\end{definition}

For an $F$-crystal $(M,F)$, given a $W(k)$-basis $\{e_1, \ldots , e_n\}$ of $M$ we can associate a matrix $A\in M_n(W(k))$ to $(M,F)$ defined by $F(e_i) = \sum\limits_{j=1}^n A_{ji}e_j$.  In this case we write $F$ as $A\circ \sigma$.  It is occasionally useful to think of an $F$-crystal as being given by a matrix.  We also note that in this case $A\in M_n( W(k))\cap GL_n(K)$.   

Given an $F$-crystal $(M,F)$ of rank $n$ we can associate to it a Hodge polygon.  We first note that since $F$ is injective, $F(M)$ is a $W(k)$-submodule of $M$ and $F(M)$ has rank $n$.  By the theory of elementary divisors we can find two $W(k)$-bases ${v_1,\ldots, v_n}$ and ${w_1,\ldots,w_n}$ of $M$ such that
\[F(v_i)=p^{a_i} w_i\]
and
\[0\leq a_1\leq a_2\leq\ldots\leq a_n.\] The $a_i$ are referred to as the Hodge slopes of $(M,F)$.

The Hodge polygon of $(M,F)$ is the graph of the function $Hodge_F :[0,n]\to \R$ defined on integegrs by 
\[\left\{ \begin{array}{ll}
        Hodge_F(i)=0 & \text{ if }i=0 \\
  Hodge_F (i)=a_1+\ldots +a_i & \text{ if }i = 1,\ldots , n 
\end{array}\right. \]
and then extended linearly between integers.
Another characterization of the Hodge numbers of an $F$-crystal comes from the Cartan Decomposition of the group $GL_n(K)$.  Let $\mu = (a_1, \ldots ,a_n)$.  We will let $[p^{\mu}]$ denote the diagonal matrix where $[p^{\mu}]_{ii} = p^{a_i}$.  We have

\[ GL_n(K) = \coprod\limits_{\mu} GL_n(W(k))\backslash [p^{\mu}]  /GL_n(W(k)) \]
where $\mu$ ranges over $(a_1, \ldots , a_n)$ such that $a_1\leq\cdots\leq a_n$. Given a matrix $A$ for $F$ we have $A = K_1 [p^{\mu} ]K_2$ where $K_1$, $K_2 \in GL_n(W(k))$. The Hodge slopes for $F$ are the numbers $a_1\leq \ldots \leq a_n$.  


To each $F$-crystal $(M,F)$ we can also associate a Newton polygon.  Let $k'$ be an algebraic closure of $k$ and let $K'$ be the field of fractions of $W(k')$.  Consider $s/r\in\Q$ where $r$ and $s$ are relatively prime integers, with $s\geq 0$ and $r>0$.  Let \[V_{s/r} = (\Q_p[T]/(T^r - p^s)\otimes_{\Q_p} K' , (\text{mult. by }T)\otimes \sigma).\]  Note that the dual of an $F$-isocrystal is given by $(V^*, F_{V^*})$ where $F_{V^*}(\phi)(x) = \sigma(\phi(F^{-1} x))$ for $\phi\in V^*$.  For $s/r<0$ we define $V_{s/r} = (V_{-s/r})^*$. By Dieudonn{\' e} the category of $F$-isocrystals  over $k'$ is semisimple, with the simple objects being the $V_{s/r}$'s.  That is, any $F$-isocrystal $(V,F)$ over $k'$ is isomorphic to a direct sum $\bigoplus\limits_{i} V_{s_i/r_i}$.   If $V$ is $n$-dimensional the Newton slopes of $(V, F)$ are $(\lambda_1,\ldots ,\lambda_n)$ where each $s_i/r_i$ is repeated $r_i$ times and $\lambda_1\leq \cdots\leq\lambda_n$.  A proof that this decomposition exists can be found in \cite{D}.  Consider the $F$-isocrystal 
$(M\otimes_{W(k)} K',F\otimes_{W(k)} \sigma)$ which comes from the $F$-crystal $(M,F)$.  The Newton slopes of $(M\otimes_{W(k)} K',F\otimes_{W(k)} \sigma)$ are independent of the choice of  the algebraic closure $k'$ of $k$.  Thus we can speak of the Newton slopes of $(M,F)$, which are defined to be the Newton slopes of $(M\otimes_{W(k)} K',F\otimes_{W(k)} \sigma)$.  The Newton polygon of $(M,F)$
is the graph of the function $Newton_F :[0,n]\to \R$ defined on integers by
\[\left\{ \begin{array}{ll}
        Newton_F(i)=0 & \text{ if }i=0 \\
  Newton_F (i)=\lambda_1+\ldots +\lambda_i & \text{ if }i = 1,\ldots , n 
\end{array}\right. \] and then extended linearly between successive integers.  

Another useful characterization of the Newton slopes of an $F$-crystal $(M,F_M)$ is the following.  Suppose $M$ has rank $n$.  Then let $L$ be the field of fractions of the ring $W(k')[X]/(X^{n!} -p) = W(k')[p^{\frac{1}{n!}}]$.  Note that we can extend $\sigma$ to $L$ by setting $\sigma(X) = X$.  By Dieudonn{\' e} if $(M, F)$ has Newton slopes $\lambda_1\leq \cdots \leq \lambda_n$ then there is a $L$-basis $e_1, \ldots e_n$ of $M\otimes_{W(k')} L$ such that $(F\otimes\sigma)(e_i)=p^{\lambda_i}e_i$ for $i = 1,\ldots , n$ (here $p^{\lambda_i}= X^{n!\lambda_i}$).

The following result relates the two polygons.

\begin{theorem}[Mazur's Inequality] For any $F$-crystal $(M,F)$ the Newton polygon lies above the Hodge polygon.  The two polygons have the same initial point $(0,0)$ and end point $(n, \nu(det F))$.
\end{theorem}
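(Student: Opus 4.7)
The plan splits into three steps: verify that the two polygons have the claimed common endpoints, reduce the slope inequality to a single base-case statement via exterior powers, and finish with a lattice argument using the Dieudonné decomposition.

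For the endpoints, both polygons begin at $(0,0)$ by definition. For the terminal point, the Cartan decomposition writes $F = K_1 [p^\mu] K_2 \circ \sigma$ with $K_1, K_2 \in GL_n(W(k))$, giving $\nu(\det F) = \sum a_i$. On the Newton side, the Dieudonné structure theorem recalled in the excerpt supplies an $L$-basis $\{e_i\}$ of $M \otimes L$ with $F(e_i) = p^{\lambda_i} e_i$; since a change of basis $P \in GL_n(L)$ corrects the determinant of the matrix of $F$ by the factor $\sigma(\det P)/\det P$, whose valuation is zero, we also obtain $\nu(\det F) = \sum \lambda_i$.

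For the inequality, I would reduce $\lambda_1 + \cdots + \lambda_i \geq a_1 + \cdots + a_i$ to its $i = 1$ case by applying it to the exterior power $(\wedge^i M, \wedge^i F)$, which is again an $F$-crystal. Working in the Hodge bases $\{v_j\},\{w_j\}$ and in the Dieudonné basis $\{e_j\}$ over $L$, a direct computation on the basis elements $v_{j_1}\wedge\cdots\wedge v_{j_i}$ and $e_{j_1}\wedge\cdots\wedge e_{j_i}$ shows that the smallest Hodge and Newton slopes of $\wedge^i M$ are $a_1 + \cdots + a_i$ and $\lambda_1 + \cdots + \lambda_i$ respectively. It therefore suffices to show $\lambda_1 \geq a_1$ for every $F$-crystal. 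Replacing $F$ by $p^{-a_1} F$ (still an $F$-crystal, since $F(M) \subseteq p^{a_1} M$) further reduces the problem to: every $F$-crystal has nonnegative Newton slopes.

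For this last claim, I would argue by contradiction. Suppose the Dieudonné decomposition $M \otimes_{W(k)} K' = \bigoplus V_{s_j/r_j}$ contains a summand with $s/r < 0$. The defining description of $V_{s/r}$ produces a nonzero $v$ with $F^r(v) = p^s v$; after scaling by a sufficiently high power of $p$ we may assume $v \in M \otimes W(k')$. Iterating, $F^{rm}(v) = p^{sm} v \in M \otimes W(k')$ for every $m \geq 1$. But with $s < 0$ the coordinates of $p^{sm} v$ in any $W(k)$-basis of $M$ have valuations tending to $-\infty$, contradicting containment in the lattice.

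The main obstacle is this final step. The reductions by exterior powers and by twisting are essentially bookkeeping, but moving a negative-slope eigenvector of some $F^r$ into the $F$-stable integral lattice and then iterating to force unbounded negative valuation is the heart of the argument, and it crucially exploits the compatibility of the Newton decomposition (only defined after passing to $K'$ or $L$) with the integral lattice $M \otimes W(k')$.
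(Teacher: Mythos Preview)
The paper does not prove Mazur's Inequality; it is stated in Section~1 as background material cited from the literature (ultimately from Katz \cite{Katz} and Mazur), so there is no proof in the paper to compare against. Your argument is the standard one and is correct: the endpoint computation, the reduction to the smallest slope via exterior powers, the twist by $p^{-a_1}$, and the lattice contradiction for a negative-slope eigenvector are exactly the steps one finds in Katz's original treatment.

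Two small points worth tightening. First, when you assert that the Newton slopes of $\wedge^i M$ are the sums $\lambda_{j_1}+\cdots+\lambda_{j_i}$, the cleanest justification is precisely the $L$-basis characterization you already invoked: the $e_{j_1}\wedge\cdots\wedge e_{j_i}$ form a diagonalizing basis for $\wedge^i F$ over $L$, so the slopes can be read off directly. Second, in the final step you want $v$ over $K'$ rather than over $L$; the safest route is to observe that on the simple summand $V_{s/r}$ one has $F^r = p^s\cdot(\text{$\sigma^r$-linear extension of the identity on the standard basis})$, so any standard basis vector already lies in $M\otimes K'$ after scaling and satisfies $F^r v = p^s v$. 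With those clarifications the argument is complete.
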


\section{Self-dual $F$-crystals over perfect fields}

In this section we will define the notion of a self-dual $F$-crystal over $k$ where $k$ is a perfect field of characteristic $p$.  We will relate this back to the notion of symplectic $F$-isocrystals. 

\begin{definition} Let $M^{*} = Hom_{W(k)}(M,W(k))$. A self-dual $F$-crystal is a quintuple $(M,F_M,F_{M^*},\Psi , c)$ where

\begin{list}{(\arabic{marker})}{\usecounter{marker}}
\item $c\in  W(k)$ and $c\neq 0$.
\item $(M,F_M)$ and $(M^*,F_{M^*})$ are $F$-crystals over $k$.
\item $F_{M^*}\circ {F_M}^* = c\cdot id$ and ${F_M}^*\circ F_{M^{*}} = \sigma^{-1}(c) id$. Note here ${F_M}^*(\varphi ) = \sigma^{-1}\circ\varphi \circ F_M$ where $\varphi \in M^*$.  
\item $\Psi:(M,F_M)\xrightarrow{\simeq}(M^*,F_{M^*})$.
\end{list}
\end{definition}
Note that we can recover the map $F_{M^*}$ from the definition above.  Consider the $\sigma$-linear map $F_{M^*}'(\varphi) = \sigma \circ\varphi\circ\sigma^{-1}(c)F_M^{-1}$ where $\varphi\in M^*$.  Indeed we see that if $\varphi\in M^*$ then
\begin{equation}\notag\begin{split}(F_{M^*}'\circ {F_M}^*)(\varphi) &= F_{M^*}'(\sigma^{-1}\circ\varphi\circ F_M) \\
         &= \sigma\circ\sigma^{-1}\circ\varphi\circ F_M \circ\sigma^{-1}(c)F_M^{-1}\\
         &= \varphi\circ c\cdot id \circ F_M\circ F_M^{-1} \\
         &= c\cdot\varphi
\end{split}\end{equation}
Thus we've shown that $F_{M^*}'\circ {F_M}^* = c\cdot id$.  We also have (from our work above and (3)) that
\[ c\cdot id \circ F_{M^*}= F_{M^*}'\circ {F_M}^* \circ F_{M^*} = F_{M^*}'   \circ\sigma^{-1}(c)\cdot id = c  \cdot id F_{M^*}'.\]
Since $W(k)$ is torsion free this implies that $F_{M^*} = F_{M^*}'$.  In terms of matrices this means that if $A$ is a matrix for $F_M$ with respect to some $W(k)$-basis $m_1, \ldots ,m_n$ then the matrix for $F_{M^*}$ in terms of the dual basis $m_1^* , \ldots m_n^*$ is $c\cdot (A^{-1})^{t}$.

In the case where $\Psi = -\Psi^*$ we say the $F$-crystal is symplectic.  If $\Psi = \Psi^*$ we say the $F$-crystal is orthogonal.

The following proposition relates the definition of symplectic $F$-crystal to the definition of symplectic isocrystal.
\begin{proposition} %
%
If $(M,F_M,F_{M^*},\Psi, c)$ is a $2n$-dimensional symplectic $F$-crystal over $k$,  then $(M,F)$ is an $F$-crystal equipped with a non-degenerate skew-symmetric form $\lb\text{ },\text{ }\rb : M \times M \to W(k)$ such that $\lb F_M(m_1), F_M(m_2)\rb = c \sigma ( \lb m_1 , m_2 \rb )$.
\end{proposition}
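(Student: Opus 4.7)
The plan is to define the pairing directly from $\Psi$ and verify the three required properties in turn. Specifically, set
\[
\lb m_1, m_2\rb := \Psi(m_1)(m_2)
\]
for $m_1, m_2 \in M$, which lands in $W(k)$ since $\Psi(m_1) \in M^* = \mathrm{Hom}_{W(k)}(M, W(k))$. The $W(k)$-bilinearity is immediate from the $W(k)$-linearity of both $\Psi$ and the functionals it produces, and non-degeneracy is immediate from the fact that $\Psi$ is an isomorphism onto $M^*$.

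For skew-symmetry I would unwind the hypothesis $\Psi = -\Psi^*$. Under the canonical identification $M = M^{**}$ sending $m$ to evaluation-at-$m$, the dual map $\Psi^*: M^{**} \to M^*$ satisfies $\Psi^*(m)(m') = \Psi(m')(m)$. The symplectic hypothesis $\Psi = -\Psi^*$ therefore translates directly into $\Psi(m)(m') = -\Psi(m')(m)$, i.e.\ $\lb m, m'\rb = -\lb m', m\rb$.

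The compatibility with $F_M$ is the main computational step, and the cleanest route avoids inverting $F_M$ on $M$ itself. First, since $\Psi$ is a morphism of $F$-crystals, $F_{M^*} \circ \Psi = \Psi \circ F_M$, so
\[
\lb F_M(m_1), F_M(m_2)\rb = \Psi(F_M(m_1))(F_M(m_2)) = F_{M^*}(\Psi(m_1))(F_M(m_2)).
\]
Next I would use the relation ${F_M}^* \circ F_{M^*} = \sigma^{-1}(c)\cdot \mathrm{id}$ together with the definition ${F_M}^*(\varphi) = \sigma^{-1}\circ \varphi \circ F_M$: applying these to $\varphi = \Psi(m_1)$ and evaluating at $m_2$ yields $\sigma^{-1}\bigl(F_{M^*}(\Psi(m_1))(F_M(m_2))\bigr) = \sigma^{-1}(c)\,\Psi(m_1)(m_2)$, i.e.\ $F_{M^*}(\Psi(m_1))(F_M(m_2)) = c\,\sigma(\Psi(m_1)(m_2))$. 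Combining these gives $\lb F_M(m_1), F_M(m_2)\rb = c\,\sigma(\lb m_1, m_2\rb)$, as required.

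The only subtle point, and the place where I would be most careful, is the identification $M^{**} \simeq M$ entering the skew-symmetry step and the precise sign conventions in $\Psi^*$; everything else is a direct application of the definition together with the compatibility axiom ${F_M}^* \circ F_{M^*} = \sigma^{-1}(c)\cdot\mathrm{id}$.
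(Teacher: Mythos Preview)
Your proof is correct and follows the same overall strategy as the paper: define $\lb m_1,m_2\rb = \Psi(m_1)(m_2)$, obtain nondegeneracy from $\Psi$ being an isomorphism, and derive the $F_M$-compatibility from the morphism identity $\Psi\circ F_M = F_{M^*}\circ\Psi$. Two minor differences are worth noting. First, you explicitly verify skew-symmetry from $\Psi=-\Psi^*$, which the paper leaves implicit. Second, in the compatibility step the paper invokes the derived formula $F_{M^*}(\varphi)=\sigma\circ\varphi\circ\sigma^{-1}(c)F_M^{-1}$, whereas you apply the axiom ${F_M}^*\circ F_{M^*}=\sigma^{-1}(c)\cdot\mathrm{id}$ directly; your route has the small advantage of never writing $F_M^{-1}$ (which a priori lives only on $M\otimes K$), though of course the paper's formula was already justified just before the proposition.
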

\begin{proof}
Given $(M,F_M, F_{M^*},\Psi, c)$ we define $\lb\ ,\ \rb$ by $\lb m_1, m_2 \rb := \Psi(m_i)(m_2)$ for $m_1$,$m_2\in M$.  Nondegeneracy of $\lb\ ,\ \rb$ follows from the fact that $\Psi$ is an isomorphism.  Lastly, recall that by (4) we have $\Psi\circ F_M = F_{M^*}\circ \Psi$.  Thus 
\begin{equation}\notag\begin{split} \lb F_M(m_1),F_M(m_2)\rb &= \Psi(F_M(m_1))(F_M(m_2))\\
        &=(F_{M^*}(\Psi(m_1))(F_M(m_2)) \text{ (since }\Psi\text{ is a morphism of crystals)} \\
        &=\sigma(\Psi(m_1)(\sigma^{-1}(c)\cdot{F_M}^{-1}(F_M(m_2))))\text{( by the formula for }F_{M^*}\text{)}\\
        &=\sigma(\Psi(m_1)(\sigma^{-1}(c)\cdot m_2))\\
        &=\sigma(\sigma^{-1}(c)\Psi(m_1)(m_2))\\
        &=c\cdot \sigma(\Psi(m_1)(m_2))\\
        &=c\sigma(\lb m_1,m_2 \rb)      \\
\end{split}\end{equation}
as required.

\end{proof}


Let $\lambda_1\leq \cdots\leq\lambda_{n}$ be the Newton slopes of $(M,F_M,F_{M^*},\Psi, c)$, a self-dual $F$-crystal,  and let $a_1\leq \cdots\leq a_n$ be the Hodge slopes of  $(M,F_M,F_{M^*},\Psi, c)$.  Let $(V^*, F_{V^*})$ denote the dual of $(M\otimes K , F_M \otimes\sigma )$. We see that the $(M^*\otimes K , F_{M^*}\otimes \sigma)$ $ = (V^*, \sigma^{-1}(c)F_{V^*})$ because of the relationship between $F_M$ and $F_{M^*}$. Thus the Newton slopes of $(M^*, F_{M^*})$ are $\nu(c)-\lambda_{n} \leq \ldots\leq \nu(c) - \lambda_1$.  Since $(M,F_M)$ and $(M^*,F_{M^*})$ are isomorphic they have the same Newton slopes, so we have for each $i$, $\lambda_i + \lambda_{n-i+1} = \nu(c)$.  

We believe the following proposition was known previously but we were unable to find a proof in the literature.  The author would like to thank M. Boyarchenko for providing the following proof.

\begin{proposition} Let $a_1, \ldots ,a_{n}$ be the  Hodge slopes of $(M,F_M,F_{M^*},\Psi, c)$.  Then $a_i + a_{n-i+1} = \nu(c)$.  
\end{proposition}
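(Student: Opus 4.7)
The plan is to compute the Hodge slopes of $(M^*, F_{M^*})$ from those of $(M, F_M)$ using the explicit formula for $F_{M^*}$ in terms of $F_M$, and then invoke the isomorphism $\Psi$ to conclude.

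First I would pick a $W(k)$-basis $m_1,\ldots,m_n$ of $M$ in which $F_M$ is given by a matrix $A\in M_n(W(k))\cap GL_n(K)$. By the Cartan decomposition recalled in Section~1, we may write $A = K_1\,[p^{\mu}]\,K_2$ with $K_1,K_2\in GL_n(W(k))$ and $\mu=(a_1,\ldots,a_n)$, $a_1\leq\cdots\leq a_n$, the Hodge slopes of $(M,F_M)$. The discussion following the definition of a self-dual $F$-crystal shows that in the dual basis $m_1^*,\ldots,m_n^*$, the matrix of $F_{M^*}$ is $c\cdot (A^{-1})^t$.

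Next I would substitute the Cartan decomposition of $A$ into this formula. We get
\[
c\cdot (A^{-1})^t \;=\; (K_2^{-1})^t \cdot c\cdot [p^{-\mu}]\cdot (K_1^{-1})^t,
\]
and $(K_1^{-1})^t,(K_2^{-1})^t\in GL_n(W(k))$. Writing $c = p^{\nu(c)}u$ with $u\in W(k)^\times$, the middle diagonal factor is $u\cdot\mathrm{diag}(p^{\nu(c)-a_1},\ldots,p^{\nu(c)-a_n})$. Since our convention puts the exponents in non-decreasing order, I would conjugate by the permutation matrix that reverses the coordinates (absorbing it into the outer $GL_n(W(k))$ factors, together with the unit $u$) to obtain a Cartan decomposition whose diagonal part is $[p^{\mu'}]$ with
\[
\mu' \;=\; (\nu(c)-a_n,\,\nu(c)-a_{n-1},\,\ldots,\,\nu(c)-a_1).
\]
These are the Hodge slopes of $(M^*, F_{M^*})$, listed in non-decreasing order.

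Finally, since $\Psi:(M,F_M)\xrightarrow{\simeq}(M^*,F_{M^*})$ is an isomorphism of $F$-crystals, the two crystals have the same Hodge slopes. Comparing the $i$-th entries gives $a_i = \nu(c) - a_{n-i+1}$, i.e.\ $a_i + a_{n-i+1} = \nu(c)$, as required. The only real subtlety here is keeping track of the reversal coming from transposing the diagonal matrix $[p^{\mu}]$ and multiplying by $c$, which is what forces the index $i\mapsto n-i+1$; otherwise the argument is a straightforward matrix computation parallel to the one for Newton slopes given just before the statement.
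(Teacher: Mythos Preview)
Your argument is correct and takes a genuinely different route from the paper's. A small slip: from $A=K_1[p^\mu]K_2$ one gets $c(A^{-1})^t=(K_1^{-1})^t\,c[p^{-\mu}]\,(K_2^{-1})^t$, not with $K_1,K_2$ swapped as you wrote; this is harmless since both outer factors lie in $GL_n(W(k))$ and only the double coset matters.

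The paper instead argues lattice-theoretically. It sets $\langle\,,\,\rangle=\Psi(\,)(\,)$, defines for any lattice $N\subset V=M\otimes K$ the dual lattice $N^\perp=\{x\in V:\langle x,y\rangle\in W(k)\ \forall y\in N\}$, and proves two lemmas: (i) if $N$ has invariants $a_1,\ldots,a_n$ relative to $M$, then $N^\perp$ has invariants $-a_1,\ldots,-a_n$; (ii) $F_M(M)^\perp=\tfrac{1}{c}F_M(M)$. Since the Hodge slopes are precisely the invariants of $F_M(M)$ in $M$, combining the two lemmas gives that these invariants are simultaneously $a_1\le\cdots\le a_n$ and $\nu(c)-a_n\le\cdots\le\nu(c)-a_1$, hence the claim. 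Your approach is shorter and exactly mirrors the Newton-slope computation the paper does just before the proposition, using only the explicit matrix description of $F_{M^*}$ and the Cartan decomposition already set up in Section~1. The paper's approach is more intrinsic (no basis is chosen beyond what is needed for elementary divisors) and isolates the identity $F_M(M)^\perp=\tfrac{1}{c}F_M(M)$, which has independent interest and foreshadows the perpendicularity arguments used later in the Newton--Hodge decomposition for self-dual $F$-crystals.
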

\begin{proof}  We will prove two lemmas which will imply the result.  Let $V = M\otimes K$ and let $\lb\ ,\ \rb = \Psi(\ )(\ )$.  Let $N\subset V$ be an $n$-dimensional $W(k)$-lattice.  Then by the theorem of elementary divisors there exists a basis $e_1, \ldots ,e_n$ of $M$ such that $p^{a_1}e_1,\ldots ,p^{a_n}e_n$ (with $a_1\leq\cdots\leq a_n$ ) is a basis of $N$. We will refer to $a_1, \ldots, a_n$ as the invariants of $N$ with respect to $M$.  Let $N^{\perp} = \{ x\in V | \lb x, y\rb \in W(k)\ \forall y\in N\}$.

\begin{lemma} If $a_1,\ldots, a_n$ are the invariants of $N$ with respect to $M$ then $-a_1, \ldots, -a_n$ are the invariants of $N^\perp$ with respect to $M$.
\end{lemma}
\begin{proof} Suppose $e_1,\ldots,e_n$ is a basis of $M$ such that $p^{a_1}e_1,\ldots, p^{a_n}e_n$ is a basis of $N$.  Let $e_1^*, \ldots,e_n^*$ be the basis of $M$ such that $\lb e_i^*, e_j\rb = \delta_{ij}$.  Such a basis exists since $\Psi$ is an isomorphism.  Let $\Lambda = \text{Span}_{W(k)}\{p^{-a_1}e_1^*,\ldots ,p^{-a_n}e_n^*\}$.  We claim that $N^{\perp} = \Lambda$.  Clearly $\Lambda \subset N^\perp$.  Suppose that $n\in N^\perp$.  Then $n = b_1e_1^* + \cdots + b_n e_n^*$ for some $b_i\in K$ since $e_1^* , \ldots ,e_n^*$ is a basis of $V$.   Now for each $i = 1, \ldots, n$ we have $\lb n, p^{a_i} e_i\rb = b_i p^{a_i}\in W(k)$ since $n\in N^\perp$.  Thus $n = b_1p^{a_1} \cdot p^{-a_1}e_1^*  + \cdots + b_np^{a_n} \cdot p^{-a_n}e_n^* \in \Lambda$. So we've shown that $p^{-a_1}e_1^*$,$\ldots$,$p^{-a_n}e_n^*$ is a basis for $N^{\perp}$, as required. 
\end{proof}
\begin{lemma} Consider $F_M(M)\subset V$.  Then $F_M(M)^{\perp} = \frac{1}{c}F_M(M)$.
\end{lemma}
\begin{proof} Recall that $\lb F_M(m_1), F_M(m_2) \rb = c\sigma(\lb m_1 , m_2 \rb ) $ for all $m_1$, $m_2 \in M$.  We have 

\begin{equation}\notag\begin{split} m\in F_M(M)^\perp &\Leftrightarrow \lb m, F_M(m_1)\rb\in W(k)\ \forall m_1\in M \\
        &\Leftrightarrow c\sigma(\lb F_M^{-1}(m) , m_1\rb ) \in W(k)\ \forall m_1 \in M \\
        &\Leftrightarrow \sigma^{-1}(c)\lb F_M^{-1}(m) , m_1\rb \in W(k)\ \forall m_1\in M \\
        &\Leftrightarrow \lb F_M^{-1}(cm), m_1\rb \in W(k)\ \forall m_1 \in M \\
        &\Leftrightarrow F_M^{-1}(cm)\in M^{\perp} = M  \\
        &\Leftrightarrow cm \in F_M(M)  \\
        &\Leftrightarrow m \in \frac{1}{c} F_M(M)\\
        \end{split}\end{equation}
\end{proof}

 Let $a_1, \ldots, a_n$ be the Hodge slopes of  $(M,F_M,F_{M^*},\Psi, c)$.  We note that this is the same as the invariants of $F(M)$ with respect to $M$.  We see from the two lemmas that the invariants of $F(M)$ with respect to $M$ are $\nu(c) - a_n \leq\cdots \leq \nu(c) - a_1$.
\end{proof}

Thus we see that for self-dual $F$-crystals the Newton and Hodge slopes satisfy a symmetry condition.

\section{Newton-Hodge Decomposition for Self-Dual $F$-crystals}

Our goal is to prove a version of Newton-Hodge Decomposition for self-dual $F$-crystals.  We continue to assume throughout this section that $k$ is a perfect field of characteristic $p$.  The result for $F$-crystals is:

\begin{theorem}[Newton-Hodge Decomposition, \cite{Katz}, Thm. 1.6.1]\label{HNDecomp} Let $(M,F)$ be an $F$-crystal of rank $n$.  Let $(A,B)\in\Z\times\Z$ be a break point of the Newton polygon of $(M,F)$ which also lies on the Hodge polygon of $(M,F)$.  Let $\lambda_1 \leq  \cdots\leq \lambda_n$ denote the Newton slopes of $(M,F)$ and let $a_1\leq \cdots \leq a_n$ be the Hodge slopes.  Then there exists a unique decomposition of $(M,F)$ as a direct sum 
\[ (M,F) = (M_1\oplus M_2, F_1\oplus F_2)\]
of two $F$-crystals $(M_1,F_1)$ and $(M_2,F_2)$ such that 
\begin{list}{(\arabic{marker})}{\usecounter{marker}}
                \item $M_1$ has rank $A$, the Hodge slopes of $(M_1, F_1)$ are  $a_1 \leq \cdots \leq a_A$ and the Newton slopes of $(M_1, F_1)$ are $\lambda_1 \leq \cdots \leq \lambda_A$.
                \item $M_2$ has rank $n-A$, the Hodge slopes of $(M_2, F_2)$ are  $h_{A+1} \leq \cdots \leq h_n$ and the Newton slopes of $(M_2, F_2)$ are $\lambda_{A+1} \leq \cdots \leq \lambda_n$.
\end{list}
\end{theorem}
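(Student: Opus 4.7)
The plan is to reduce to the case where $k$ is algebraically closed, construct $M_1$ as the intersection of $M$ with a canonical subisocrystal of $V := M \otimes_{W(k)} K$, and then use the Hodge break hypothesis to produce a complementary $F$-stable lattice inside $M$.

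First reduce to $k$ algebraically closed: since $M_1$ will be characterized intrinsically as $M \cap V_1$ for a Galois-stable $V_1 \subset V$, Galois descent recovers the decomposition over $k$ from the one over $k'$. So assume $k$ is algebraically closed. Since $(A,B)$ is a break point of the Newton polygon we have $\lambda_A < \lambda_{A+1}$, and by the Dieudonn\'e--Manin classification $V$ splits canonically as $V = V_1 \oplus V_2$ in the category of $F$-isocrystals, where $V_1$ collects the isoclinic summands of slope $\le \lambda_A$ and $V_2$ those of slope $\ge \lambda_{A+1}$.

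Set $M_1 := M \cap V_1$, a saturated $F$-stable $W(k)$-sublattice of rank $A$ with Newton slopes $\lambda_1, \dots, \lambda_A$. The quotient $M/M_1$ is torsion-free and embeds as an $F$-stable lattice $\overline{M_2}$ inside $V_2$, with Newton slopes $\lambda_{A+1}, \dots, \lambda_n$. This yields a short exact sequence $0 \to M_1 \to M \to \overline{M_2} \to 0$ of $F$-crystals. By Mazur's inequality applied to $M_1$ and $\overline{M_2}$ separately, the Hodge polygons of these crystals end at $(A, B)$ and $(n-A, \nu(\det F) - B)$ respectively. Comparing the Hodge polygon of $M$ to the concatenation of the Hodge polygons of $M_1$ and $\overline{M_2}$ (both passing through $(A, B)$ by hypothesis), and using convexity together with the Mazur inequalities, one identifies the Hodge multisets $\{a_i(M_1)\} = \{a_1, \dots, a_A\}$ and $\{a_j(\overline{M_2})\} = \{a_{A+1}, \dots, a_n\}$.

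It remains to split the sequence. The slope gap $\lambda_A < \lambda_{A+1}$ makes $V$ split canonically as an extension of $V_2$ by $V_1$, giving a unique $F$-equivariant section $s\colon V_2 \hookrightarrow V$. To lift $s$ to the lattice level, choose a basis of $\overline{M_2}$ adapted to the Hodge filtration, lift it to elements $f_j \in M$, and write $s(\bar f_j) = f_j + v_j$ with $v_j \in V_1$; the Hodge coincidence at $(A,B)$ lets one adjust the lifts so that each $v_j$ lies in $M_1$. Setting $M_2 := \mathrm{span}_{W(k)}(f_1 + v_1, \dots, f_{n-A} + v_{n-A})$ then gives an $F$-stable complement, and uniqueness of the decomposition follows from the intrinsic characterization $M_1 = M \cap V_1$. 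The main obstacle is this last step: splitting on the isocrystal level is immediate from the slope gap, but descending the section to a lattice-level splitting is exactly where the coincidence of Newton and Hodge break points at $(A,B)$ is indispensable, and I expect this integrality check to be the technical heart of the proof.
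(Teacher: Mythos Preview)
The paper does not prove this theorem: it is quoted from \cite{Katz}, Thm.~1.6.1, and used as a black box in the proof of Theorem~\ref{HNDS}. There is therefore no proof in the paper to compare your attempt against.

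For what it is worth, your outline follows the standard strategy (and is close to Katz's own argument): define $M_1 = M\cap V_1$ via the Dieudonn\'e--Manin slope decomposition, check the Hodge slopes via Mazur's inequality and the break hypothesis, and then split the resulting short exact sequence $0\to M_1\to M\to \overline{M_2}\to 0$. You correctly flag the last step as the crux, but as written it is not a proof: the sentence ``the Hodge coincidence at $(A,B)$ lets one adjust the lifts so that each $v_j$ lies in $M_1$'' is an assertion, not an argument. Katz's actual mechanism is iterative: one shows that if the matrix of $F$ has block-triangular shape with the off-diagonal block divisible by $p^{a_{A+1}}$ (guaranteed by the Hodge break), then a $\sigma$-linear fixed-point/successive-approximation argument kills that block, using that the Newton slopes of $M_1$ are strictly smaller than those of $\overline{M_2}$ to get convergence in the $p$-adic topology. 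Your sketch gestures at this but does not supply the estimate that makes the iteration converge; if you want to turn this into a proof you need to write down that convergence argument explicitly.
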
 

In terms of matrices the theorem above states that it is possible to find a $W(k)$-basis of $(M, F)$ such that the matrix for $F$ is $\begin{bmatrix} F_1 & 0 \\ 0 & F_2 \end{bmatrix}$ where $(M_1, F_1)$ and $(M_2 ,F_2)$ satisfy the slope conditions listed above.  Here  $F_1 := F|_{M_1}$ and $F_2 := F|_{M_2}$.  We also have that $M_1$ is the unique $F$-stable submodule of $M$ such that $M_1$ is free of rank $A$, the Newton slopes $M_1$ are $\lambda_1\leq \ldots \leq\lambda_A$ , and $M_2 = M/M_1$ is free of rank $n-A$ (\cite{Katz}, 1.6.3).

The result below is the generalization of Newton-Hodge Decomposition to the case of self-dual $F$-crystals.  We note that in  \cite{Ko3} Kottwitz proved a group theoretic generalization of \ref{HNDecomp} for split connected reductive groups in the case where $k$ is algebraically closed.  Later in \cite{V} it was noted that the result in \cite{Ko3} could be strengthened using essentially the same proof as in \cite{Ko3}.  Here our result is not in purely group theoretic terms and there are less assumptions on $k$ (we only assume that $k$ is perfect, but it need not be algebraically closed). 
 
\begin{theorem}[Newton-Hodge Decomposition for Self-Dual $F$-crystals]\label{HNDS} Let \\ $(M,F_M,F_{M^*},\Psi, c)$ be a self-dual $F$-crystal of rank $n$.  Let $(A,B)\in\Z\times\Z$ be a break point of the Newton polygon of $(M,F_M,F_{M^*},\Psi,c)$ which also lies on the Hodge polygon of $(M,F_M,F_{M^*},\Psi,c)$ and assume $A<\frac{n}{2}$.  Let $\lambda_1 \leq  \cdots\leq \lambda_{n}$ denote the Newton slopes of $(M,F_M,F_{M^*},\Psi,c)$ let $a_1\leq \cdots \leq a_{n}$ be the Hodge slopes.  Then there exists a unique decomposition of $(M,F)$ as a direct sum 
\[ (M,F) = (M_{S_1}\oplus M_{S_2}, F_{S_1}\oplus F_{S_2})\]
of two self-dual $F$-crystals $(M_{S_1},F_{M_{S_1}},F_{M_{S_1}^*},\Psi_{S_1},c)$ and $(M_{S_2},F_{M_{S_2}},F_{M_{S_2}^*},\Psi_{S_2},c)$ such that 
\begin{list}{(\arabic{marker})}{\usecounter{marker}}
  \item The rank of $M_{S_1}$  is $2A$, the Hodge slopes of $(M_{S_1},F_{S_1})$ are $a_1 \leq \cdots \leq a_A \leq a_{n-A+1} \leq \cdots \leq a_{n}$ and the Newton slopes of $(M_{S_1},F_{S_1})$ are $\lambda_1 \leq \cdots \leq \lambda_A \leq \lambda_{n-A+1}\leq \cdots \leq \lambda_{n}$.
  \item The rank of $M_{S_2}$ is $n-2A$, the Hodge slopes of $(M_{S_2},F_{S_2})$ are $a_{A+1} \leq \cdots \leq a_{n-A}$ and the Newton slopes of $(M_{S_2},F_{S_2})$ are $\lambda_{A+1} \leq \cdots \leq \lambda_{n-A}$.
\end{list}

\end{theorem}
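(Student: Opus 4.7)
The strategy is to exploit the self-dual symmetries $\lambda_i+\lambda_{n-i+1}=\nu(c)$ and $a_i+a_{n-i+1}=\nu(c)$ established above to produce a second breakpoint of the Newton polygon which also lies on the Hodge polygon, located at $\bigl(n-A,\ \tfrac{n}{2}\nu(c)-A\nu(c)+B\bigr)$; its $y$-coordinate is automatically an integer since it equals $\sum_{i=1}^{n-A}a_i$, a sum of integer Hodge slopes. Applying Katz's Newton-Hodge Decomposition (Theorem~\ref{HNDecomp}) at each of the two breakpoints $(A,B)$ and $(n-A,\,\cdot)$, together with the uniqueness statement \cite{Katz}, 1.6.3, I will produce a triple $F$-crystal decomposition $M=M_1\oplus M_{S_2}\oplus M_3$ with the predicted Hodge and Newton slopes on each summand. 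Setting $M_{S_1}:=M_1\oplus M_3$ then gives the required $F$-crystal decomposition; the remaining task is to verify that the self-dual data $(\Psi,c)$ respects it.

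Concretely, Theorem~\ref{HNDecomp} applied at $(A,B)$ yields a saturated rank-$A$ $F$-stable summand $M_1\subset M$ carrying the slopes $\lambda_1,\ldots,\lambda_A$ and $a_1,\ldots,a_A$, and applied at $(n-A,\,\cdot)$ yields a decomposition $M=N\oplus M_3$ with $N$ of rank $n-A$ carrying the lowest $n-A$ slopes and $M_3$ of rank $A$ carrying the highest $A$ slopes. Uniqueness in \cite{Katz}, 1.6.3 forces $M_1\subset N$. A second application of Theorem~\ref{HNDecomp} inside the $F$-crystal $N$ at its own breakpoint $(A,B)$ splits $N=M_1\oplus M_{S_2}$, where $M_{S_2}$ has rank $n-2A$ and carries the middle slopes $\lambda_{A+1}\leq\cdots\leq\lambda_{n-A}$ and $a_{A+1}\leq\cdots\leq a_{n-A}$, giving the triple decomposition.

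To verify self-duality I work with the $K$-bilinear form $\langle x,y\rangle:=\Psi(x)(y)$ on $V:=M\otimes_{W(k)}K$, which is non-degenerate and satisfies $\langle Fx,Fy\rangle=c\sigma\langle x,y\rangle$; set $V_\bullet:=M_\bullet\otimes K$. Since $FV_1=V_1$, the orthogonal $V_1^{\perp}\subset V$ is $F$-stable, and the evaluation pairing $y\mapsto\langle\,\cdot\,,y\rangle$ furnishes an isomorphism $V/V_1^{\perp}\xrightarrow{\sim}V_1^{*}$ intertwining $F_V$ with $c\cdot F_{V_1^{*}}$. Consequently the Newton slopes of $V/V_1^{\perp}$ are $\nu(c)-\lambda_A,\ldots,\nu(c)-\lambda_1$, which by the self-dual symmetry equal $\lambda_{n-A+1},\ldots,\lambda_n$; hence $V_1^{\perp}$ has Newton slopes $\lambda_1,\ldots,\lambda_{n-A}$. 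Uniqueness of the slope filtration on $V$ identifies $V_1^{\perp}=V_1\oplus V_{S_2}$, so $V_1$ is isotropic and $V_{S_2}\perp V_1$. The symmetric argument applied to $V_3$ yields $V_3$ isotropic and $V_{S_2}\perp V_3$, so $\langle M_{S_2},M_{S_1}\rangle=0$. Therefore $\Psi$ splits as $\Psi_{S_1}\oplus\Psi_{S_2}$ along $M=M_{S_1}\oplus M_{S_2}$, and $F_{M^{*}}$ preserves the dual decomposition $M^{*}=M_{S_1}^{*}\oplus M_{S_2}^{*}$ because $F_M^{-1}$ preserves each $V_{S_i}$; the restricted $F_{M_{S_i}^{*}}$ satisfy the self-dual axioms by restriction from $M$.

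The main obstacle is the third paragraph: establishing the twisted-evaluation isomorphism that computes the Newton slopes of $V_1^{\perp}$ and then identifying $V_1^{\perp}=V_1\oplus V_{S_2}$ via uniqueness of the slope filtration. Once this identification is in hand, the integral orthogonality $\langle M_{S_2},M_{S_1}\rangle=0$ is immediate, and the splitting of $\Psi$ into unimodular pieces $\Psi_{S_i}\colon M_{S_i}\to M_{S_i}^{*}$ follows routinely from the unimodularity of $\Psi$ on $M$ together with the saturation of each summand from Theorem~\ref{HNDecomp}. Uniqueness of the entire self-dual decomposition then follows from Katz's uniqueness at the two breakpoints $(A,B)$ and $(n-A,\,\cdot)$; the orthogonal case $\Psi=\Psi^{*}$ is handled identically.
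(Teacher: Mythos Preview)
Your proposal is correct and follows the same overall strategy as the paper: use the slope symmetries $\lambda_i+\lambda_{n-i+1}=\nu(c)$ and $a_i+a_{n-i+1}=\nu(c)$ to produce the mirror breakpoint at $x=n-A$, apply Katz's Theorem~\ref{HNDecomp} twice to obtain the triple decomposition $M=M_1\oplus M_{S_2}\oplus M_3$, set $M_{S_1}=M_1\oplus M_3$, and then verify that $\Psi$ respects the splitting. (Incidentally, your $y$-coordinate $(\tfrac{n}{2}-A)\nu(c)+B$ for the mirror breakpoint is the correct one; the paper's $-B$ is a typo.)

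The only substantive difference is in how the orthogonality $\Psi(M_i)\subset M_{?}^*$ is established. The paper extends scalars to $L=\mathrm{Frac}\bigl(W(\bar k)[p^{1/n!}]\bigr)$, chooses an $F$-eigenbasis $e_1,\dots,e_n$ with $Fe_i=p^{\lambda_i}e_i$, and observes directly that $p^{\lambda_i+\lambda_j}\langle e_i,e_j\rangle=c\,\sigma\langle e_i,e_j\rangle$ forces $\langle e_i,e_j\rangle=0$ whenever $\lambda_i+\lambda_j\neq\nu(c)$. You instead work on the isocrystal $V=M\otimes K$, show that $V_1^{\perp}$ is $F$-stable, compute its Newton slopes via the twisted duality $V/V_1^{\perp}\cong(V_1^{*},\,cF_{V_1^{*}})$, and invoke uniqueness of the slope filtration to identify $V_1^{\perp}=V_1\oplus V_{S_2}$. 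Both arguments are short and standard; the paper's is more explicit, yours more structural. One small point: since the definition of self-dual $F$-crystal imposes no symmetry on $\Psi$, you need both $\langle M_{S_1},M_{S_2}\rangle=0$ and $\langle M_{S_2},M_{S_1}\rangle=0$ for $\Psi$ to split; your ``symmetric argument'' should be understood as also computing the left-orthogonals $^{\perp}V_1$ and $^{\perp}V_3$ (or equivalently running the same slope count on the other side), which goes through identically.
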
 

\begin{proof} For a self-dual $F$-crystal $(M,F_M, F_{M^*},\Psi ,c)$, if $(A,B)$ is a break point of the Newton polygon and lies on the Hodge polygon then $(n-A,\nu(c)(\frac{n}{2}-A) - B)$ is also a break point of the Newton Polygon which lies on the Hodge polygon of $(M,F_M,F_{M^*},\Psi, c)$ by the symmetry property we showed earlier for self-dual $F$-crystals.   Thus we can apply \ref{HNDecomp} to $(M,F_M)$ twice (at the points $(A,B)$ and $(n-A, (\frac{n}{2}-A)\nu(c) - B))$ to obtain the decomposition $(M_1 \oplus M_2 \oplus M_3, F_1 \oplus F_2 \oplus F_3)$ where 
\begin{list}{(\arabic{marker})}{\usecounter{marker}}
  \item  The rank of $M_1$ is $A$, the Hodge slopes of $(M_1,F_1)$ are $a_1 \leq \cdots \leq a_A$ and  the Newton slopes of $(M_1,F_1)$ are $\lambda_1 \leq \cdots \leq \lambda_A $.
  \item The rank of $M_2$ is $n-2A$, the Hodge slopes of $(M_2,F_2)$ are $a_{A+1} \leq \cdots \leq a_{n-A}$ and the Newton slopes of $(M_2,F_2)$ are $\lambda_{A+1} \leq \cdots \leq \lambda_{n-A}$
  \item The rank of $M_3$ is $A$, the Hodge slopes of $(M_3,F_3)$ are $a_{n-A+1} \leq \cdots \leq a_{n}$ and the Newton slopes of $(M_3,F_3)$ are $\lambda_{n-A+1} \leq \cdots \leq \lambda_{n}$.
\end{list}

We will show that the $F$-crystals $(M_1 \oplus M_3 ,F_1\oplus F_3)$ and $(M_2, F_2)$ both have a self-dual structure which comes from the restriction of $\Psi$ to $M_1\oplus M_3$ and $M_2$.  First we note that since $(M_1 \oplus M_2 \oplus M_3, F_1 \oplus F_2 \oplus F_3)$ there exists a basis $m_1,\ldots ,m_{n}$  for $M$ such that the matrix for $F_M$ is 
\[\begin{bmatrix} F_1 & & 0 \\ & F_2 & \\ 0 & & F_3 \end{bmatrix}. \]
Here $F_1 = F|_{M_1}$, $F_2 = F|_{M_2}$ and $F_3 = F|_{M_3}$. Thus the matrix for $F_{M^*}$ (by our remark in section 2) in terms of the dual basis $m_1^* ,\ldots ,m_n^*$ is
\[\begin{bmatrix} c( F_1^{-1})^t & & 0\\ & c(F_2^{-1})^t & \\ 0 & & c(F_3^{-1})^t \end{bmatrix}.\] 
Thus we see that $(M_1^* \oplus M_2^* \oplus M_3^*, F_{M^*}|_{M_1^*} \oplus F_{M^*}|_{M_2^*} \oplus F_{M^*}|_{M_3^*}) = (M^*, F_{M^*})$ as $F$-crystals.  To show that the restriction of $\Psi$ provides the required self-dual structure we will show that $\Psi(M_1)\subset M_3^*$, $\Psi(M_2)\subset M_2^*$ and $\Psi (M_3)\subset M_1^*$.  Since we know that $\Psi: M \to M^*$ is an isomorphism and since $M^*=M_1^*\oplus M_2^*\oplus M_3^*$ this will imply that $\Psi(M_1)=M_3^*$, $\Psi(M_2)=M_2^*$ and $\Psi(M_3)=M_1^*$.  Furthermore, since $(M^*, F_{M^*})$ is a direct sum of the $F$-crystals $(M_1^*,F|_{M_1^*})$, $(M_2^*,F|_{M_2^*})$, and $(M_3, F|_{M_3^*})$, once we've shown this we will have that $\Psi|_{M_1\oplus M_3}$ and $\Psi|_{M_2}$ are both morphisms of $F$-crystals.

To show the required inclusions we recall a characterization of the Newton slopes associated to an $F$-crystal $(M,F_M)$ introduced in section 1. Let $L$ be the field of fractions of $W(k')[p^{1/n!}]$ where $k'$ is an algebraic closure of $k$.  Then there exists an $L$ basis $e_1 , \ldots , e_n$ of $M\otimes_{W(k)} L$ such that $(F_M\otimes\sigma) (e_i)= p^{\lambda_i}e_i$.  Note that since $M$ is a $W(k)$-module which is free of finite rank $Hom_{W(k)}(M,W(k))\otimes_{W(k)} L = Hom_{W(k)}(M, W(k)\otimes L) = Hom_L(M\otimes L, L)$.  Thus $\Psi: M\to M^*$ induces a non-degenerate bilinear form $\Psi\otimes\sigma : M\otimes L \to (M\otimes L)^*$. Sometimes we will refer to the bilinear form induced by $\Psi$ as $\lb\text{ } ,\text{ } \rb_M$ and the form induced by $\Psi\otimes\sigma$ as  $\lb\text{ } ,\text{ } \rb_{M\otimes L}$ to avoid confusion.  We note that in this case $\lb m_1\otimes 1, m_2\otimes 1\rb_{M\otimes L} = \lb m_1 ,m_2\rb_M$ for $m_1$, $m_2 \in M$. 

We will now show that $\Psi (M_1)\subset M_3^*$.  This is equivalent to showing that $\lb m_1,m \rb_M=0$ for all $m_1 \in M_1$ and $m\in (M_1\oplus M_2)\otimes L$. Note that $e_1 , \dots e_A$ is an $L$-basis for $M_1\otimes L$ and $e_1 ,\ldots ,e_{n-A}$ is and $L$-basis for $M_1\oplus M_2$.  This is because $(A,B)$ and $(n -A, \nu(c)(\frac{n}{2}-A) - B)$ are both breakpoints of the Newton polygon of $(M,F_M)$.  Suppose that $m_1\in M_1$ and $m\in M_1\oplus M_2$.  Then $m_1\otimes 1 = a_1 e_1 + \cdots + a_A e_A$ and $m = b_1 e_1 + \cdots + b_{n-A} e_{n-A}$ for some $a_i, b_i \in L$.  We have for any $i$ and $j$ that

\[\lb (F_M\otimes\sigma)(e_i),(F_M\otimes\sigma)(e_j)\rb_{M\otimes L} = \lb p^{\lambda_i}e_i ,p^{\lambda_j}e_j \rb_{M\otimes L} = p^{\lambda_i + \lambda_j}\lb e_i,e_j\rb_{M\otimes L}\]
 and 
\[\lb (F_M\otimes\sigma)(e_i),(F_M\otimes\sigma)(e_j)\rb_{M\otimes L} = c\sigma(\lb e_i ,e_j\rb_{M\otimes L}).\]

Thus $p^{\lambda_i + \lambda_j} \lb e_i , e_j\rb_{M\otimes L} = c\sigma(\lb e_i,e_j\rb_{M\otimes L})$.  Thus we must have $\nu(c) = \lambda_i + \lambda_j$ or $\lb e_i , e_j \rb_{M\otimes L} = 0$.  Now suppose that $1\leq i \leq A$ and $1\leq  j \leq n - A$.  Then $A+1\leq n -j +1 \leq n$, so $i < n - j +1$.  By our hypothesis, $(A, B)$ was a break point of the Newton polygon of $(M,F_M)$, so $\lambda_A <\lambda_{A+1}$ which implies $\lambda_i < \lambda_{n- j +1}$.  Thus $\lambda_i + \lambda_j < \lambda_{n-j +1} + \lambda_j = \nu(c)$.  So in this case we must have $\lb e_i , e_j\rb_{M\otimes L} = 0$.  This implies that $\lb m_1 \otimes 1 , m\otimes 1\rb_{M\otimes L} = 0 $ which is what we wanted to show.  Similar arguments show that $\Psi(M_2)\subset M_2^*$ and $\Psi(M_3)\subset M_1^*$.



\end{proof}

Note that the theorem above gives for a self-dual $F$-crystal $(M, F_M, F_{M^*}, \Psi, c)$ a filtration of $F_M$-stable submodules $0\subset M_1 \subset M_2\subset M$ where
\begin{list}{(\arabic{marker})}{\usecounter{marker}}
\item The Newton slopes of $(M_1 , F_M|_{M_1})$ are the first $A$ Newton slopes of $(M , F_M)$.
\item The Newton slopes of $(M_2 , F_M|_{M_2})$ are the first $n-A$ Newton slopes of $M$.
\item $M_1$ and $M_2$ are perpendicular, i.e. $\Psi(m_1)(m_2) = \Psi(m_2)(m_1) =0$ for all $m_1\in M_1$ and $m_2\in M_2$. 

\end{list}
\section{F-crystals over $\F_p$-algebras}
In this section we review the definition of $F$-crystals over an $\F_p$-algebra.  Again we follow \cite{Katz}.

 \begin{definition} An \emph{absolute test object} is a triple $(B,I,\ga)$ where $B$ is a $p$-adically complete and separated $\Z_p$-algebra, $I\subset B$ is a closed ideal with $p\in I$ and $\ga =\{\ga_n\}$ is a divided power structure on $I$ such that $\ga_n (p)$ is the image of $p^n /n!$ in $B$ (note that $p^n /n!\in \Z_p$).  If $A_\circ$ is an $\F_p$-algebra, then an \emph{$A_\circ$-test object} $(B,I,\ga;s)$ is an absolute test object $(B,I,\ga)$ with the structure of an $A_\circ$-algebra on $B/I$ given by $s$, i.e. $s:A_\circ\to B/I$ is a homomorphism of $\F_p$-algebras.  A morphism of $A_\circ$-test objects $f:(B,I,\ga ;s)\to (B',I',\ga ';s')$ is an algebra homomorphism $f:B\to B'$ which maps $I$ to $I'$, commutes with $\ga$ and $\ga '$ (i.e. $f\circ\ga_i = \ga_i '\circ f$ for each $i\geq 0$) and induces an $A_\circ$-homomorphism $B/I\to B'/I'$. 
\end{definition}

\begin{definition} A \emph{crystal} $M$ on $A_\circ$ assigns to every $A_\circ$-test object $(B,I,\ga ;s)$ a $p$-adically complete and separated $B$-module $M(B,I,\ga ;s)$.  It also assigns to every map $f:(B,I,\ga ;s)\to (B',I',\ga ';s)$ of $A_\circ$-test objects a $B'$-module isomorphism $M(f): M(B,I,\ga ;s)\hat{\otimes}_B B'\xrightarrow{\cong} M(B',I',\ga ';s')$, and this rule is compatible with compositions of maps of $A_\circ$-test objects.   A morphism of crystals $u:M\to N$ on $A_\circ$ assigns to each $A_\circ$-test object $(B,I,\ga ; s)$ a $B$-module map $u(B,I,\ga ;s):M(B,I,\ga ;s)\to N(B,I,\ga ;s)$ which is compatible with the maps $M(f)$ and $N(f)$.  A morphism of crystals $u:M\to N$ is an \emph{isogeny} if there exists a map $v:N\to M$ and $n\geq 0$ such that $u\circ v=p^n$ and $v\circ u=p^n$.
\end{definition}

Given a homomorphism of $\F_p$-algebras $\phi :A_\circ\to B_\circ$ and a $B_\circ$-test object $(B,I,\ga ;s)$, $(B,I,\ga ;s\circ\phi)$ is a $A_\circ$-test object.  With this in mind, given a crystal $M$ on $A_\circ$, we can define a crystal $M^{(\phi)}$ on $B_\circ$ by $M^{(\phi)}(B,I,\ga ;s) = M(B,I,\ga ;s\circ\phi)$.  Sometimes this is referred to as the ``inverse image'' crystal on $B_\circ$. If we are given a morphism $u:M\to N$ of crystals on $A_\circ$ then we obtain $u^{(\phi)}:M^{(\phi)}\to N^{(\phi)}$, a morphism of crystals on $B_\circ$ defined by $u^{(\phi)}(B,I,\ga ;s)=u(B,I,\ga ;s\circ\phi)$.

A crystal $M$ is said to be locally free of rank $n$ if for any $A_\circ$-test object $(B,I,\ga ; s)$, we have that $M(B,I,\ga ;s)$ is a locally free $B$-module of rank $n$. We are now ready to introduce the generalized version of an $F$-crystal.  

\begin{definition}An \emph{$F$-crystal $(M,F)$ on $A_\circ$} is a locally free crystal $M$ on $A_\circ$ together with an isogeny $F:M^{(\sigma )}\to M$.  A morphism between $F$-crystals $(M,F)$ and $(M',F')$ is a morphism $f:M\to M'$ of crystals on $A_\circ$ where $F'\circ f^{(\sigma)}=f\circ F$.  
\end{definition}

If $A_\circ$ is a perfect ring then the $A_\circ$-test object $(W(A_\circ),(p),\ga ;s)$ is an initial object in the category of all $A_\circ$-test objects [G].  The divided power structure $\ga$ is uniquely determined in this case by the requirement $\ga_n(p) = p^n/n!$ and the homomorphism $s: A_\circ \to W(A_\circ)/(p)$ is the inverse of the isomorphism $W(A_\circ)/(p)\xrightarrow{\cong} A_\circ$.  Evaluating at this initial object provides an equivalence of categories between the category of crystals on $A_\circ$ and the category of $p$-adically complete and separated $W(A_\circ)$-modules.  We also see that if $A_\circ$ is a perfect field then an $F$-crystal over $A_\circ$ corresponds to the definition we had before in section 2.

The operation of taking the inverse image of a crystal corresponds to extension of scalars, ie. if $M:= \M(W_{A_\circ}, (p), \ga; s)$ where  $\M$ is a crystal over $A_{\circ}$ and we have $\phi: A_\circ \to B_\circ $, a morphism of $\F_p$-algebras, then $\M^{(\phi)}(W_{B_\circ} ,(p), \ga ; s\circ\phi) := M\hat{\otimes}_{W(A_\circ)}W(B_\circ)$ where the $W(B_\circ)$-module structure comes from $W(\phi):W(A_\circ)\to W(B_\circ)$.  Given an $F$-crystal over $A_\circ$ we can also associate to each $\mathfrak{p}\in\Spec A_\circ$ an $F$-crystal over a perfect field.  Let $A_\circ^{\text{perf}}$ denote the perfection of $A_\circ$, i.e. $A_\circ^{\text{perf}} := \varinjlim\limits_{\sigma} A_\circ$.  We see by inspection that $\Spec A_\circ = \Spec A_\circ^{\text{perf}}$ as sets.  Let $\mathfrak{p}\in\Spec A_\circ$.  Let  $k_{\mathfrak{p}} = Fr(A_\circ^{\text{perf}} /\mathfrak{p})$.  By taking the inverse image on $k_{\mathfrak{p}}$ using $\phi_{\mathfrak{p}}: A_\circ \to k_{\mathfrak{p}}$ we get an $F$-crystal over $k_{\mathfrak{p}}$, namely $(M,F)^{(\phi_{\mathfrak{p}})}$.  The Hodge and Newton polygon of this $F$-crystal does not depend on the choice of $k_{\mathfrak{p}}$ only $\text{Ker}(\phi_{\mathfrak{p}})=\mathfrak{p}$.  So we may speak of what happens at a point of $\Spec A_\circ$, or what happens to $(M,F)$  ``pointwise''.

From now on we will consider the case in which $A_\circ$ is an $\F_p$-algebra which is not necessarily perfect, but one of the following two types:
\begin{list}{(\arabic{marker})}{\usecounter{marker}}
  \item $A_\circ$ is smooth over a perfect subring $A_{\circ\circ}$ of $A_{\circ}$
  \item $A_\circ$ is the ring of power series over a perfect ring $A_{\circ\circ}$ in finitely many variables
\end{list}

In both of these cases there exists a $p$-adically complete and separated $\Z_p$-algebra $A_{\infty}$ such that $A_{\infty}$ is flat over $\Z_p$, $A_{\infty}/p A_{\infty}\cong A_\circ$ and for each $n\geq 0$, $A_n := A_{\infty}/p^{n+1} A_{\infty}$ is formally smooth over $\Z/p^{n+1}\Z$ \cite{Katz}.  There is an explicit construction of such a ring in \cite{BM}.  $A_{\infty}$ should be thought of as a generalized version of a Cohen ring over a perfect field.  By \cite{G} $A_{\infty}$ is naturally a $W(A_{\circ\circ})$-algebra.  $A_{\infty}$ is unique up to automorphisms that are the identity on $W(A_{\circ\circ})$ and which reduce mod $p$ to the identity.  Using formal smoothness of the $A_n$'s we can lift $\sigma : A_\circ\to A_\circ$ to a ring homomorphism  $\Sigma: A_{\infty}\to A_{\infty}$ (i.e. $\Sigma$ reduces mod $p$ to $\sigma$), however this lifting is not unique.

Consider the $A_\circ$-test object $(A_{\infty},(p),\ga;s)$ where $s$ is the inverse of $A_{\infty}/pA_{\infty} \xrightarrow{\cong} A_\circ$.  This test object is not initial; however it is weakly initial, that is, any $A_\circ$-test object receives a map from it, though this map need not be unique.  Evaluation at this object provides an equivalence of categories between crystals on $A_\circ$ and the category of pairs $(M,\nabla)$ consisting of a $p$-adically complete and separated $A_{\infty}$-module $M$ together with an integrable, nilpotent $W(A_{\circ\circ})$-connection $\nabla$.  Fixing a lifting $\Sigma: A_{\infty}\to A_{\infty}$ of $\sigma$ we obtain an equivalence of categories between the category of $F$-crystals on $A_\circ$ and the category of triples $(M,\nabla, F_{\Sigma})$ where

\begin{list}{(\arabic{marker})}{\usecounter{marker}}
\item $M$ is a locally free $A_\infty$-module
\item $\nabla$ is an integrable, nilpotent $W(A_{\circ\circ})$-connection.
\item $F_{\Sigma}: (M^{(\Sigma)},\nabla^{(\Sigma)})\to (M,\nabla)$ is an isogeny (i.e. $F_\Sigma$ induces an isomorphism after tensoring $M^{(\Sigma)}$ and $M$ over $\Z_p$ with $\Q_p$).
\end{list}

We can also construct, given a choice of $\Sigma$, a unique inclusion $i(\Sigma):A_{\infty}\hookrightarrow W(A_\circ^{\text{perf}})$ such that $i(\Sigma)$ reduces mod $p$ to the inclusion $A_\circ\hookrightarrow A_\circ^{\text{perf}}$ and $i(\Sigma)\circ\Sigma = W(\sigma)\circ i(\Sigma)$.  Suppose we have an $F$-crystal on $A_\circ$ thought of as a triple $(M,\nabla, F_\Sigma)$.  Its inverse image on $A_\circ^{\text{perf}}$ is the pair $(M\otimes_{A_{\infty}}W(A_\circ^{\text{perf}}),F_\Sigma \otimes id)$, obtained from $(M,F_\Sigma ,\nabla)$ by extension of scalars via $i(\Sigma):A_\infty\to W(A_\circ^{\text{perf}})$.  Thus at every point $\mathfrak{p}$ of $\Spec A_\circ $ the $F$-crystal over $W(k_{\mathfrak{p}})$ associated to $(M,F_\Sigma ,\nabla_\Sigma )$  is given by $(M\otimes_{A_{\infty}} W(k_{\mathfrak{p}}) , F_{\Sigma}\otimes_{A_{\infty}} id )$, where $W(k_{\mathfrak{p}})$ is viewed as an $A_\infty$-algebra via $W(\phi_{\mathfrak{p}})\circ i(\Sigma )$. From now on $\Phi_{\mathfrak{p}} = W(\phi_{\mathfrak{p}})\circ i(\Sigma)$ where $\phi_{\mathfrak{p}} : A^{\text{perf}}_\circ \to k_{\mathfrak{p}}$ is a ring homomorphism such that $k_{\mathfrak{p}}$ is a perfect field and $\text{Ker} (\phi_{\mathfrak{p}}) = \mathfrak{p}$.       We have the following:

\begin{theorem}[Newton-Hodge Filtration, \cite{Katz}, Thm. 2.4.2]\label{NHFilt} Let $(M, \nabla , F_\Sigma)$ be an $F$-crystal over $A_\circ$ of rank $n$.  Suppose that $(A, B)$ is a break point of the Newton polygon of $(M, \nabla , F_\Sigma)^{(\Phi_{\mathfrak{p}})}$ at every point $\mathfrak{p}\in\Spec A_\circ$, and that $(A, B)$ lies on the Hodge polygon of $(M, \nabla , F_\Sigma)^{(\Phi_{\mathfrak{p}})}$ at every point $\mathfrak{p}$ of $\Spec A_\circ$.  Then there exists a unique $F_\Sigma$-stable horizontal $A_\infty$-submodule $M_1 \subset M$ with $M_1$ locally free of rank $A$ and $M/M_1$ locally free of rank $n- A$, such that: 

\begin{list}{(\arabic{marker})}{\usecounter{marker}}
        \item At every point $\mathfrak{p}\in\Spec A_\circ $ the Hodge and Newton slopes of \\ $(M_1 , \nabla |_{M_1}, F_\Sigma|_{M_1})^{(\Phi_{\mathfrak{p}})}$ are the first $A$ Hodge and Newton slopes of \\ $(M, \nabla , F_\Sigma)^{(\Phi_{\mathfrak{p}})}$.
        \item At every point $\mathfrak{p}\in\Spec A_\circ$ the Hodge and Newton slopes of \\ $(M/M_1 , \nabla |_{M/M_1}, F_\Sigma|_{M/M_1})^{(\Phi_{\mathfrak{p}})}$ are the last $n-A$ Hodge and Newton slopes of $(M, \nabla , F_\Sigma)^{(\Phi_{\mathfrak{p}})}$.
\end{list}

\end{theorem}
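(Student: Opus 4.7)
The plan is to combine the pointwise Newton-Hodge Decomposition (Theorem~\ref{HNDecomp}) with a $p$-adic successive approximation argument carried out over $A_\infty$. Fix a lifting $\Sigma: A_\infty \to A_\infty$ of Frobenius and represent the $F$-crystal as a triple $(M, \nabla, F_\Sigma)$ with $M$ a locally free $A_\infty$-module. At each point $\mathfrak{p} \in \Spec A_\circ$, Theorem~\ref{HNDecomp} applied to the pullback $(M,F_\Sigma)^{(\Phi_\mathfrak{p})}$ provides a unique $F_\Sigma$-stable submodule $M_1(\mathfrak{p})$ of rank $A$ whose Newton and Hodge slopes are the first $A$ slopes of the original crystal. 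The task is thus to manufacture a single $F_\Sigma$-stable, $\nabla$-horizontal $A_\infty$-submodule $M_1 \subset M$, locally free of rank $A$, whose pullback to $W(k_\mathfrak{p})$ recovers $M_1(\mathfrak{p})$ at every $\mathfrak{p}$.

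First I would exploit the assumption that $(A,B)$ lies on the Hodge polygon at every point to find, locally on $\Spec A_\circ$, a basis of $M$ in which the matrix of $F_\Sigma$ is block-upper-triangular of the form $\begin{bmatrix} A_1 & C \\ 0 & A_2 \end{bmatrix}$ modulo a high power of $p$, with the top-left block carrying the first $A$ Hodge slopes. This yields a first approximation $M_1^{(0)} \subset M$ of rank $A$. I would then iteratively modify $M_1^{(k)}$ by adding a $p$-adically small correction chosen to shrink the off-diagonal block; the relevant correction equation is a Sylvester-type equation in $A_1$ and $A_2$, which can be solved with a uniform loss of $p$-adic precision because the Newton break hypothesis guarantees a gap $\lambda_{A+1} - \lambda_A > 0$ at every point of $\Spec A_\circ$. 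This gap yields a uniform contraction rate, so the sequence $M_1^{(k)}$ converges $p$-adically to a genuine $F_\Sigma$-stable $A_\infty$-submodule $M_1 \subset M$ whose fiber at each $\mathfrak{p}$ agrees with $M_1(\mathfrak{p})$ by pointwise uniqueness.

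To establish horizontality I would apply pointwise uniqueness once more: each $W(A_{\circ\circ})$-derivation $D$ of $A_\infty$ acts on $M$ via $\nabla$, and the compatibility of $\nabla$ with $F_\Sigma$ built into the notion of an $F$-crystal forces $M_1 + D(M_1)$ to be $F_\Sigma$-stable with the same first $A$ Newton and Hodge slopes at every point; the pointwise uniqueness in Theorem~\ref{HNDecomp} then forces $D(M_1) \subset M_1$, hence $\nabla(M_1) \subset M_1 \otimes \Omega^1_{A_\infty/W(A_{\circ\circ})}$. Local freeness of $M_1$ and $M/M_1$ follows from the constancy of their fiber ranks, and global uniqueness of $M_1$ is again immediate from the pointwise statement. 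The main obstacle I anticipate is the convergence argument of the middle paragraph: the block-triangular normalization is only local on $\Spec A_\circ$, so the local approximations must be patched, and the $p$-adic contraction rate coming from the Newton gap must be uniform on the base. This is precisely where both hypotheses play essential roles—the Hodge contact provides the initial approximation $M_1^{(0)}$, and the pointwise Newton break provides the uniform contraction that drives the iteration to convergence.
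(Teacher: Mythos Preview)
The paper does not prove this theorem at all: it is quoted verbatim from Katz \cite{Katz}, Theorem~2.4.2, and used as a black box in the proof of the self-dual version (Theorem~\ref{NHFiltS}). There is therefore no in-paper argument to compare your proposal against.

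That said, your outline is a faithful sketch of Katz's original strategy: use the Hodge contact to produce an initial block form, then run a $p$-adic contraction on the off-diagonal block whose convergence is forced by the Newton gap $\lambda_{A+1}>\lambda_A$, and finally invoke pointwise uniqueness to glue and to deduce horizontality. Two places where your sketch is thinner than Katz's actual proof are worth flagging. First, the ``uniform contraction rate'' is not supplied directly by the pointwise inequality $\lambda_{A+1}-\lambda_A>0$; Katz extracts the needed integral estimate from the Hodge data (specifically from the fact that $\bigwedge^A F_\Sigma$ is divisible by $p^B$ and $\bigwedge^{A+1} F_\Sigma$ by $p^{B+a_{A+1}}$), not from a slope difference that could in principle vary over $\Spec A_\circ$. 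Second, your horizontality step asserts that $M_1+D(M_1)$ is $F_\Sigma$-stable, but the horizontality of $F_\Sigma$ as a morphism of modules with connection does not immediately yield this; Katz instead argues that any horizontal modification of the approximate $M_1$ is absorbed by the iteration (equivalently, that the limit is automatically horizontal because the correction terms can be chosen horizontally), rather than by a post-hoc uniqueness argument of the shape you describe. These are refinements rather than fatal gaps, and your overall plan is the right one.
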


\section{Self-dual $F$-crystals over $\F_p$-algebras}

In this section $A_\circ$ is the same type of ring as at the end of the previous section.  We will define the notion of a self-dual $F$-crystal over $\F_p$.  Then we will prove a version of the Newton-Hodge Filtration Theorem (\ref{NHFilt}) for self-dual $F$-crystals over an $\F_p$-algebra $A_\circ$.  

Before we give the definition of a self-dual $F$-crystal we note that for a crystal $M$ over $A_\circ$ we can define its dual by requiring that \\ $M^*(B,I,\ga ; s) = Hom_B(M(B,I,\ga; s),B)$.  The base change condition is satisfied because all of the modules we are dealing with are locally free.  

\begin{definition} A self-dual $F$-crystal is a quintuple $(M , F_M, F_{M^*}, \Psi , c)$ where
\begin{list}{(\arabic{marker})}{\usecounter{marker}}
\item $c\in W(A_{\circ\circ})$ such that $c = p^m\cdot u$ where $u\in W(A_{\circ\circ})$ is a unit.
\item $m \geq 0$.
\item $(M,F_M)$ and $(M^* ,F_{M^*} )$ are $F$-crystals over $A_\circ$.
\item We have ${F_M}^*\circ F_{M^*} = c\cdot id_{M^*}$ and $F_{M^*}\circ {F_M}^* = c\cdot id_{(M^*)^{(\sigma )}}$ (note that here $F_M^* : M^* \to (M^{(\sigma)})^*$ is the transpose of $F_M$.  To make the required compositions we are using that $(M^{(\sigma)})^*$ and $(M^*)^{(\sigma)}$ are canonically isomorphic to each other.
\item $\Psi : (M, F) \to (M^* , F_{M^*})$ is an isomorphism of $F$-crystals (i.e. an isomorphism at every test object).
\end{list}
\end{definition}

Sometimes we will think of a self-dual $F$-crystal over $A_\circ$ under the equivalence of categories discussed in the previous section.  In this case we will write  $(M ,{F_M}_{\Sigma}, \nabla , {F_{M^*}}_{\Sigma}, \Psi , c)$ for  $(M , F_M, F_{M^*}, \Psi , c)$.  We see that under the equivalence, $\Psi: M\times M \to A_{\infty}$ is a non-degenerate bilinear form.
  
Given a self-dual crystal $(M , {F_M}_{\Sigma} , \nabla, {F_{M^*}}_{\Sigma}, \Psi , c)$ we can consider for each $\mathfrak{p}\in\Spec A_\circ$ the inverse image $\Phi_{\mathfrak{p}}: A_{\infty}\to W(k_{\mathfrak{p}})$ of $(M, {F_M}_\Sigma)$, $(M^* , {F_{M^*}}_\Sigma)$ and $\Psi$.  This results in a self-dual $F$-crystal over $k_{\mathfrak{p}}$ which we will denote \\ $(M , {F_M}_{\Sigma} , \nabla, {F_{M^*}}_{\Sigma}, \Psi , c)^{(\Phi_{\mathfrak{p}})}$.


Before we state the version of this theorem for self-dual crystals we will prove a lemma that is needed both for the proof of the theorem above and for the proof of the Newton-Hodge Filtration for self-dual $F$-crystals.

\begin{lemma}  If $a\in A_{\infty}$ is such that $\Phi_{\mathfrak{p}}(a) = 0$ for all $\mathfrak{p}$ then we have $a = 0$.
\end{lemma}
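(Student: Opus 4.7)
The plan is to transfer the hypothesis from $A_\infty$ to the perfect ring $A_\circ^{\mathrm{perf}}$ via the inclusion $i(\Sigma)$, and then use the basic fact that a perfect $\F_p$-algebra is reduced to kill the resulting Witt coordinates.

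First I would set $b := i(\Sigma)(a) \in W(A_\circ^{\mathrm{perf}})$. Since $i(\Sigma)$ was explicitly constructed as an inclusion, it is enough to prove $b=0$. Translating the hypothesis through $\Phi_{\mathfrak{p}} = W(\phi_{\mathfrak{p}})\circ i(\Sigma)$, we have $W(\phi_{\mathfrak{p}})(b) = 0$ in $W(k_{\mathfrak{p}})$ for every prime $\mathfrak{p}$, where we use the identification $\Spec A_\circ = \Spec A_\circ^{\mathrm{perf}}$ noted earlier.

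Next, write $b = (b_0, b_1, b_2, \ldots)$ in Witt coordinates with $b_i \in A_\circ^{\mathrm{perf}}$. The functor $W$ acts componentwise on Witt coordinates, so $W(\phi_{\mathfrak{p}})(b) = (\phi_{\mathfrak{p}}(b_0), \phi_{\mathfrak{p}}(b_1), \ldots)$, and its vanishing forces $\phi_{\mathfrak{p}}(b_i) = 0$ for every $i$. Since $\ker \phi_{\mathfrak{p}} = \mathfrak{p}$, this says $b_i \in \mathfrak{p}$ for every $i$ and every $\mathfrak{p}$, so each $b_i$ lies in the intersection of all prime ideals of $A_\circ^{\mathrm{perf}}$, i.e.\ in its nilradical.

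Finally, I would invoke the observation that any perfect $\F_p$-algebra $R$ is reduced: if $x\in R$ satisfies $x^{p^k}=0$, then applying the inverse of the (bijective) Frobenius $k$ times gives $x=0$. Hence the nilradical of $A_\circ^{\mathrm{perf}}$ is zero, each $b_i$ vanishes, so $b=0$ and therefore $a=0$. The only nontrivial input is the injectivity of $i(\Sigma)$, which is asserted in the text preceding the lemma; the remainder is elementary bookkeeping with Witt vectors and the standard fact that perfect rings of characteristic $p$ are reduced.
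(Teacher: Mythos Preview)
Your proof is correct and is essentially identical to the paper's own argument: both push $a$ into $W(A_\circ^{\mathrm{perf}})$ via $i(\Sigma)$, use the componentwise action of $W(\phi_{\mathfrak{p}})$ to force each Witt coordinate into every prime of $A_\circ^{\mathrm{perf}}$, and conclude by the reducedness of perfect $\F_p$-algebras together with the injectivity of $i(\Sigma)$. Your version is slightly more explicit in justifying why perfect rings are reduced, but the structure and content match the paper exactly.
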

\begin{proof} Recall that $\Phi_{\mathfrak{p}} =W(\phi_{\mathfrak{p}})\circ  i(\Sigma)$.  Consider $i(\Sigma)(a) = (a_0 , a_1, \ldots )\in W(A_\circ^{\text{perf}})$.  We have $\Phi_{\mathfrak{p}} (a) = (\phi_{\mathfrak{p}}(a_0),\phi_{\mathfrak{p}}(a_1),\ldots) $.  Consider $a_i$.  If $\Phi_\mathfrak{p} (a) =0$ then $\phi_{\mathfrak{p}} (a_i)=0$ for each $\mathfrak{p}\in \Spec A_\circ^{\text{perf}}$.  Thus we have $a_i \in\mathfrak{p}$ for each $\mathfrak{p}\in A_\circ^{\text{perf}}$.  This means $a_i$ is nilpotent and hence $a_i =0$ for each $i$ since $A_\circ^{\text{perf}}$ is reduced.  This means $i(\Sigma)(a_i) = 0$, but $i(\Sigma)$ was injective so $a=0$ as required.
\end{proof}

\begin{theorem} [Newton Hodge Filtration for Self-Dual $F$-crystals]\label{NHFiltS} Let \\ $(M , {F_M}_{\Sigma} , \nabla, {F_{M^*}}_{\Sigma}, \Psi , c)$  be a self-dual $F$-crystal over $A_\circ$ of rank $n$.  Suppose that $(A, B)$ is a breakpoint of the Newton polygon of $(M , {F_M}_{\Sigma} , \nabla, {F_{M^*}}_{\Sigma}, \Psi , c)^{(\Phi_{\mathfrak{p}})}$ at every point $\mathfrak{p}\in\Spec A_\circ$, and that $(A, B)$ lies on the Hodge polygon of \\ $(M , {F_M}_{\Sigma} , \nabla, {F_{M^*}}_{\Sigma}, \Psi , c)^{(\Phi_{\mathfrak{p}})}$ at every point $\mathfrak{p}$ of $\Spec A_\circ$ and suppose furthermore that $A< \frac{n}{2}$.  Then there exist two $F_\Sigma$-stable horizontal $A_\infty$-submodules $M_1 \subset M_2 \subset M$ such that:

\begin{list}{(\arabic{marker})}{\usecounter{marker}}
   \item At every point $\mathfrak{p}\in\Spec A_\circ$ the Hodge and Newton slopes of \\ $(M_2/M_1 , \nabla |_{M_2/M_1}, F_\Sigma|_{M_2/M_1})^{(\Phi_{\mathfrak{p}})}$ are the middle $n-2A$ Hodge and Newton slopes of $(M, \nabla , F_\Sigma)^{(\Phi_{\mathfrak{p}})}$.
   \item  $(M_2/M_1 , \nabla |_{M_2/M_1}, F_\Sigma|_{M_2/M_1})$ can be given the structure of a self-dual $F$-crystal.
         \item  $M_1$ and $M/M_2$ are dual to each other, in other words $M_1 \cong (M/M_2)^*$ and $M/M_2 \cong M_1^*$.
\end{list}

\end{theorem}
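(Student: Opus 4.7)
The plan is to follow the structure of Theorem \ref{HNDS}, but lifted to $A_\infty$-modules by applying Katz's Newton-Hodge Filtration (Theorem \ref{NHFilt}) twice and using the preceding Lemma as a bridge between pointwise and global statements. First, by the slope symmetry established for self-dual $F$-crystals over perfect fields, if $(A,B)$ is a breakpoint of Newton lying on Hodge at every $\mathfrak{p}$, then a second point $(n-A, B')$ has the same property at every $\mathfrak{p}$. Applying Theorem \ref{NHFilt} at each of these breakpoints yields $F_\Sigma$-stable horizontal locally free submodules $M_1 \subset M$ of rank $A$ and $M_2 \subset M$ of rank $n-A$ carrying the expected slopes pointwise.

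To see that $M_1 \subset M_2$, I would consider the composite $M_1 \hookrightarrow M \twoheadrightarrow M/M_2$. Pointwise this composite vanishes by Theorem \ref{HNDS} applied to the inverse image self-dual $F$-crystal at $\mathfrak{p}$, and in local trivializations it is represented by a matrix with entries in $A_\infty$, so the preceding Lemma applied entry by entry forces it to vanish globally. Local freeness of $M_2/M_1$ of rank $n-2A$ follows from the short exact sequence $0 \to M_2/M_1 \to M/M_1 \to M/M_2 \to 0$ splitting locally. The slope statement in condition~(1) then holds pointwise by \ref{HNDS} at each $\mathfrak{p}$.

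For condition~(2), I would check by the same pointwise-plus-Lemma argument that $\Psi(M_1) \subset M_2^\perp$ and $\Psi(M_2) \subset M_1^\perp$ inside $M^*$. Dualizing the filtration $M_1 \subset M_2 \subset M$ produces an $F_{M^*}$-stable filtration $M_2^\perp \subset M_1^\perp \subset M^*$; stability follows by writing $F_M$ in block-upper-triangular form on a local basis adapted to the filtration and invoking the matrix formula $F_{M^*} = c (F_M^{-1})^t$ from Section~2. Then $\Psi$ induces $\Psi_{M_2/M_1}: M_2/M_1 \to M_1^\perp/M_2^\perp \cong (M_2/M_1)^*$, and together with the inherited $F$-structures and the same scalar $c$ this gives the desired self-dual structure. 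Non-degeneracy of $\Psi_{M_2/M_1}$ and the two isomorphisms $M_1 \cong (M/M_2)^*$ and $M/M_2 \cong M_1^*$ of condition~(3) all reduce to one question: an $A_\infty$-linear map between locally free $A_\infty$-modules of equal rank that is an isomorphism after every $\Phi_\mathfrak{p}$ is an isomorphism globally. For this I would pass to the determinant in a local trivialization, obtaining an element $d \in A_\infty$ whose image under every $\Phi_\mathfrak{p}$ is a unit in $W(k_\mathfrak{p})$. Reducing mod $p$, $d$ lies in no prime of $A_\circ^{\text{perf}}$, hence in no prime of $A_\circ$ (since $\Spec A_\circ^{\text{perf}} \to \Spec A_\circ$ is a bijection of underlying sets), so $d$ is a unit in $A_\circ$; $p$-adic completeness of $A_\infty$ then lifts this to a unit in $A_\infty$.

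The main obstacle will be this last passage from pointwise invertibility to global invertibility: the Lemma as stated handles only the vanishing direction, so the determinant detour through $A_\circ^{\text{perf}}$ and the bijection of primes is essential. A secondary technical point is ensuring that the connections $\nabla|_{M_1}$, $\nabla|_{M_2}$, and the connection descended to $M_2/M_1$ are integrable, nilpotent, and compatible with the constructed $F$-structures and pairings; this should be automatic since Theorem \ref{NHFilt} produces horizontal $F_\Sigma$-stable submodules and all subsequent constructions are functorial.
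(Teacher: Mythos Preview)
Your proposal is correct and follows the same broad outline as the paper, but the two implementations diverge at two points worth noting.

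First, to obtain $M_1 \subset M_2$ you apply Theorem~\ref{NHFilt} twice to $M$ in parallel (at $(A,B)$ and at $(n-A,B')$) and then deduce the inclusion pointwise-plus-Lemma. The paper instead applies Theorem~\ref{NHFilt} first to $M$ at the breakpoint $(n-A,B')$ to produce $M_2$, and then applies it \emph{to $M_2$} at $(A,B)$ to produce $M_1$ already sitting inside $M_2$. This nested application makes the containment automatic and avoids one use of the Lemma. (Both routes implicitly rely on the uniqueness clause after Theorem~\ref{HNDecomp} to identify $M_i^{(\Phi_{\mathfrak p})}$ with the filtration pieces coming from Theorem~\ref{HNDS}; the paper makes this explicit, and you should too.)

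Second, and more interestingly, your argument for the isomorphisms in (2) and (3) passes through determinants and needs the auxiliary statement that an element of $A_\infty$ which is a unit at every $\Phi_{\mathfrak p}$ is a unit globally---a fact not covered by the Lemma, which you rightly flag as the main obstacle. The paper sidesteps this entirely. After establishing (via the Lemma, exactly as you propose) that $M_1$ and $M_2$ are perpendicular, it observes that the composite $M \xrightarrow{\Psi} M^* \xrightarrow{\mathrm{res}_{M_1}} M_1^*$ factors through a map $\Psi' : M/M_2 \to M_1^*$; since $\Psi$ is an isomorphism and $\mathrm{res}_{M_1}$ is surjective (local splitting of $0\to M_1\to M\to M/M_1\to 0$), $\Psi'$ is \emph{surjective}, and a surjection between locally free modules of the same rank is an isomorphism. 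The same diagram-chase handles $\overline{\Psi}: M_2/M_1 \to (M_2/M_1)^*$. So the paper never needs a pointwise-to-global invertibility principle, only the elementary fact that surjective plus equal rank implies isomorphism. Your determinant route works, but the paper's is shorter and stays within the tools already on the table.
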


\begin{proof} Note that since we know pointwise that $(A,B)$ is a breakpoint of the Newton polygon and lies on the Hodge polygon of $(M, \nabla, F_\Sigma)$, the same is true for $(n-A, (\frac{n}{2}-A)m -B)$ because of the symmetry of the Hodge and Newton slopes.  Thus we can apply Theorem~\ref{NHFilt} to the $F$-crystal  $(M, \nabla , F_\Sigma)$ at the breakpoint $(n-A, (\frac{n}{2}-A)m -B)$ to get a locally free $F_\Sigma$-stable $A_\infty$-submodule $0\subset M_2 \subset M$ such that $M_2$ satisfies the conclusion of Theorem 5.4.  In particular, at every point $\mathfrak{p}$ of $\Spec A_\circ$, $(A, B)$ is a breakpoint of the Newton polygon of $(M_2 , \nabla |_{M_2}, F_\Sigma|_{M_2})^{(\Phi_{\mathfrak{p}})}$ that lies on the Hodge polygon of $(M_2 , \nabla |_{M_2}, F_\Sigma|_{M_2})^{(\Phi_{\mathfrak{p}})}$.  Thus we can apply Theorem 5.4 again to $(M_2 , \nabla |_{M_2}, F_\Sigma|_{M_2})$ to obtain $0\subset M_1\subset M_2$, a locally free $F_\Sigma$-stable $A_\infty$-module of rank $A$ which satisfies the conclusions of Theorem 5.4.  We note that $M/M_1$ is locally free of rank $n-A$ since $M_1$, $M_2/M_1$, $M/M_2$ and $M$ are all locally free. To summarize we have: 

\begin{list}{(\arabic{marker})}{\usecounter{marker}}
   \item $M_1$ is locally free of rank $A$, $M_2$ is locally free of rank $n-A$, $M/M_1$ is locally free of rank $n-A$, $M/M_2$ is locally free of rank $A$, and $M_2/M_1$ is locally free of rank $n-2A$.
   \item At every point $\mathfrak{p}\in\Spec A_\circ $ the Hodge and Newton slopes of \\ $(M_1 , \nabla |_{M_1}, F_\Sigma|_{M_1})^{(\Phi_{\mathfrak{p}})}$ are the first $A$ Hodge and Newton slopes of \\ $(M, \nabla , F_\Sigma)^{(\Phi_{\mathfrak{p}})}$.
   \item At every point $\mathfrak{p}\in\Spec A_\circ $ the Hodge and Newton slopes of \\ $(M_2 , \nabla |_{M_2}, F_\Sigma|_{M_2})^{(\Phi_{\mathfrak{p}})}$ are the first $n-A$ Hodge and Newton slopes of \\ $(M, \nabla , F_\Sigma)^{(\Phi_{\mathfrak{p}})}$.
   \item At every point $\mathfrak{p}\in\Spec A_\circ$ the Hodge and Newton slopes of \\ $(M_2/M_1 , \nabla |_{M_2/M_1}, F_\Sigma|_{M_2/M_1})^{(\Phi_{\mathfrak{p}})}$ are the middle $n-2A$ Hodge and Newton slopes of $(M, \nabla , F_\Sigma)^{(\Phi_{\mathfrak{p}})}$.
\end{list}
We need to show that  $(M_2/M_1 , \nabla |_{M_2/M_1}, F_\Sigma|_{M_2/M_1})$ can be given a non-degenerate bilinear form.  We will show that $\Psi$ induces an isomorphism $\overline{\Psi}: M_2/M_1 \to (M_2/M_1)^*$. We first note that for every point $\mathfrak{p}\in\Spec A_\circ$ the self-dual $F$-crystal  $(M , {F_M}_{\Sigma} , \nabla, {F_{M^*}}_{\Sigma}, \Psi , c)^{(\Phi_{\mathfrak{p}})}$ over $k_{\mathfrak{p}}$ satisfies the hypotheses of Theorem 4.2.  This gives us a filtration $0\subset (M^{(\Phi_{\mathfrak{p}})})_1\subset (M^{(\Phi_{\mathfrak{p}})})_2\subset M^{(\Phi_{\mathfrak{p}})}$ where $\Psi^{(\Phi_{\mathfrak{p}})}(m_1)(m_2) = 0$ for all $m_1\in (M^{(\Phi_{\mathfrak{p}})})_1$, $m_2\in (M^{(\Phi_{\mathfrak{p}})})_2$.   We claim that $(M^{(\Phi_{\mathfrak{p}})})_1 = M_1^{(\Phi_{\mathfrak{p}})}$.  Note that $M_1^{(\Phi_{\mathfrak{p}})}$ is ${F_M}^{(\Phi_{\mathfrak{p}})}_\Sigma$-stable and that $M^{(\Phi_{\mathfrak{p}})}/M_1^{(\Phi_{\mathfrak{p}})}$ is free.  Also the Newton slopes of $( M_1^{(\Phi_{\mathfrak{p}})}, {F_M}^{(\Phi_{\mathfrak{p}})}_\Sigma|_{ M_1^{(\Phi_{\mathfrak{p}})}})$ are  the first $A$ Newton slopes of $(M^{(\Phi_{\mathfrak{p}})}, {F_M}^{(\Phi_{\mathfrak{p}})}_\Sigma )$. But $(M^{(\Phi_{\mathfrak{p}})})_1$ is the unique submodule of $M^{(\Phi_{\mathfrak{p}})}$ that satisfies these properties by Theorem 4.1, so we must have $(M^{(\Phi_{\mathfrak{p}})})_1 = M_1^{(\Phi_{\mathfrak{p}})}$. Similarly,   $(M^{(\Phi_{\mathfrak{p}})})_2 = M_2^{(\Phi_{\mathfrak{p}})}$. Now given $m_1\in M_1$ and $m_2 \in M_2$ we have that $\Phi_{\mathfrak{p}}(\Psi(m_1)(m_2)) = \Psi^{(\Phi_{\mathfrak{p}})}(m_1\otimes 1)(m_2 \otimes 1)$.  We also know that $m_1 \otimes 1 \in  (M^{(\Phi_{\mathfrak{p}})})_1$ and $m_2 \otimes 1 \in  (M^{(\Phi_{\mathfrak{p}})})_2$.  Thus  $\Phi_{\mathfrak{p}}(\Psi(m_1)(m_2)) = 0$ for each $\mathfrak{p}\in \Spec A_\circ$ so we have $\Psi(m_1)(m_2)=0$ for all $m_1\in M_1$ and $m_2 \in M_2$.  Similarly, $\Psi(m_2)(m_1) = 0$ for all $m_1\in M_1$ and $m_2 \in M_2$; in other words, $M_1$ and $M_2$ are perpendicular. 

To define $\overline{\Psi}$ first we will show that $\Psi(M_2) = (M/M_1)^*$.  Let $\text{res}_{M_1}: M^* \to M_1^*$ be the restriction map, i.e. $\text{res}_{M_1}(\phi) = \phi|_{M_1}$.  We claim that $\text{res}_{M_1}$ is surjective.  All we need to show is that this is true locally, but  $M_1$ and $M/M_1$ are both locally free so we have that $M = M_1 \oplus M/M_1$ locally.  Thus, locally, $M^* \twoheadrightarrow M_1^*$ and $\text{res}_{M_1}$ is surjective.  Now consider $\text{res}_{M_1}\circ \Psi$.  We have $\Psi(m_2)(m_1) = 0$ for all $m_1 \in M_1$, $m_2\in M_2$.  Thus $\text{res}_{M_1}\circ \Psi$ factors through the quotient $M/M_2$; we will denote the induced map $\Psi ': M/M_2 \to M_1^*$.  We have \\
\centerline{\xymatrix { 
M \ar@{->>}[r]^{\Psi} \ar@{->>}[rd] & M^* \ar@{->>}[r]^{\text{res}_{M_1}}& M_1^*\\
 & M/M_2 \ar[ru]_{\Psi '} & },} 
\\
so $\Psi '$ is surjective.  Since $M/M_2$ and $M_1^*$ are both locally free and of rank $A$ we have that $\Psi '$ is in fact an isomorphism.  Recall that $(M/M_1)^* = \text{Ker}( \text{res}_{M_1})$.  We have $\Psi(M_2)\subset (M/M_1)^*$ since $M_1$ and $M_2$ are perpendicular to each other.  To show the other containment suppose $\phi \in (M/M_1)^*$.  Then there exists $m\in M$ such that $\Psi(m) = \phi$.  We have $\Psi ' (m + M_2) = \text{res}_{M_1} (\Psi(m)) = 0$.  But $\Psi '$ is injective, so $m\in M_2$ as required.  We also see that $\Psi'$ provides an isomorphism between $M/M_2$ and $M_1^*$ which shows that $M_1$ and $M/M_2$ are dual to each other.

Now we show that $\Psi$ induces an isomorphism $M_2/M_1 \to (M_2/M_1)^*$.  Note that $M^*\twoheadrightarrow M_2^*$ since this is the case locally.  This gives us $\text{res}_{M_2}: (M/M_1)^* \twoheadrightarrow (M_2/M_1)^*$.  Let $\Psi|_{M_2}$ denote $\Psi$ restricted to $M_2$. We see that $(\text{res}_{M_2} \circ\Psi |_{M_2})(m_1) = 0$ since $M_1$ and $M_2$ are perpendicular.  Thus $\text{res}_{M_2} \circ\Psi |_{M_2}$ factors through the quotient $M_2/M_1$, so we get an induced map $\overline{\Psi}: M_2/M_1 \to (M_2/M_1)^*$.  We have \\
\centerline{\xymatrix { 
M_2 \ar@{->>}[r]^{\Psi|_{M_2}} \ar@{->>}[rd] & (M/M_1)^* \ar@{->>}[r]^{\text{res}_{M_2}}& (M_2/M_1)^*\\
 & M_2/M_1 \ar[ru]_{\overline{\Psi}} & .}}  
Since $\overline{\Psi}$ is surjective, and since $M_2/M_1$  and $(M_2/M_1)^*$ are both locally free of rank $n-2A$,  $\overline{\Psi}$ is an isomorphism as required.



\end{proof}

\end{document}